\newtheoremstyle{myremark}     {10pt}{10pt}{}{}{\bfseries}{.}{.5em}{}
\newtheorem{thm}{Theorem}[section]
\newtheorem{lem}[thm]{Lemma}
\newtheorem{prop}[thm]{Proposition}
\theoremstyle{definition}
\newtheorem{defn}[thm]{Definition}
\theoremstyle{myremark}
\newtheorem{rem}[thm]{Remark}
\numberwithin{equation}{section}
\def\C{\mathbb C}
\def\N{\mathbb N}
\def\R{\mathbb R}
\newcommand{\CB}{\mathcal{B}}
\newcommand{\CD}{\mathcal{D}}
\newcommand{\CE}{\mathcal{E}}
\newcommand{\h}{\mathcal{H}}
\newcommand{\CM}{\mathcal{M}}
\newcommand{\CN}{\mathcal{N}}
\newcommand{\CP}{\mathcal{P}}
\newcommand{\abs}[1]{\left\vert#1\right\vert}
\newcommand{\norm}[1]{\left\Vert#1\right\Vert}
\newcommand{\ri}{\rightarrow}
\newcommand{\limn}{ \underset{n \ri \infty}{\lim} }
\newcommand{\auleq}{\underset{a.u.}{\leq }}
\begin{document}

	\title[Law of iterated logarithm]{Non-commutative Law of iterated logarithm}

	\author[S. Panja]{Sourav Panja}
	\address{Theoretical Statistics and Mathematics Unit, Indian Statistical Institute, Delhi, India}
	\email{souravpanja07@gmail.com}

	\author[\'E. Ricard]{\'Eric Ricard}
	\address{Université Caen Normandie, CNRS, LMNO UMR6139, F-14000 CAEN, FRANCE.}
	\email{eric.ricard@unicaen.fr}
	
	\author[D. Saha]{Diptesh Saha}
	\address{Theoretical Statistics and Mathematics Unit, Indian Statistical Institute, Delhi, India}
	\email{dptshs@gmail.com}

	 \keywords{ Law of the iterated logarithm; non-commutative martingales; Exponential Inequality}
	 \subjclass[2010]{Primary  46L53, 46L55; Secondary 60F15.}
	
	

    \begin{abstract}
    We prove optimal non-commutative analogues of the classical Law of
      Iterated Logarithm (LIL) for both martingales and sequences of
      independent (non-commutative) random variables. The classical
      martingale version was established by Stout
      \cite{stout1970martingale} and the independent case by
      Hartman–Wintner \cite{hartman1941law}. Our approach relies on a
      key exponential inequality essentially due to Randrianantoanina
      \cite{randrianantoanina2024triple} that improves that from Junge
      and Zeng \cite{junge2015noncommutative}. It allows to derive an optimal
      non-commutative Stout-type LIL just as in \cite{zeng2015kolmogorov}; from that martingale result we
      then deduce a non-commutative Hartman–Wintner-type LIL for
      independent sequences of random variables.
    \end{abstract}

\maketitle
\section{Introduction}

Within probability theory, the law of iterated logarithm (LIL) occupies a unique position among limit theorems: while the strong law of large numbers (LLN) describes the almost sure convergence of normalized sums to a constant, and the central limit theorem (CLT) characterizes the limiting distribution of these sums after normalization, the LIL precisely identifies the scale of the fluctuations that remain between these two extremes. The earliest contributions in the context of independent increments are due to Khintchine, Kolmogorov, and Hartman--Wintner; a historical overview can be found in~\cite{Bauer96Prob}. Stout subsequently extended the LIL of Kolmogorov and Hartman--Wintner to the martingale setting in ~\cite{stout1970martingale,stout1970hartman}.  

Beyond the scalar case, analogues of the LIL have been established for sums of independent random variables taking values in Banach spaces, due to Kuelbs, Ledoux, Talagrand, Pisier, and others (see~\cite{ledoux1991probability} for detailed references).
In recent decades, non-commutative probability theory has emerged as a powerful generalization of classical probability, motivated by the study of operator algebras, quantum statistical mechanics, and free probability. In this setting, the role of the probability space is played by a von Neumann algebra $\mathcal{M}$ equipped with a faithful, normal, tracial state $\tau$, and random variables correspond to $\tau$-measurable operators affiliated with $\mathcal{M}$. In this framework, convergence theorems for martingales and ergodic theorems have been studied extensively (cf. \cite{cuculescu1971martingales},  \cite{junge2002doob}, \cite{lance1976ergodic}, \cite{JX07} and references therein). On the other hand, both the LLN and the CLT have also received considerable attention in the literature; see, for example, \cite{batty1979strong, Luc, Quang, Voi} and the references cited there. However, the progress on the LIL has been more recent and relatively limited in the non-commutative setting. In particular, Konwerska~\cite{konwerska2008law} obtained a version of the Hartman--Wintner type LIL for sequences of independent self-adjoint operators and Zeng~\cite{zeng2015kolmogorov} an analogue of Stout's result that we both aim to improve. 

The martingale version of Kolmogorov's LIL \cite{kolmogoroff1929gesetz} is due to Stout~\cite{stout1970martingale}. Let us first recall Stout's LIL. Let $(X_n, \mathcal{F}_n)_{n \geq 1}$ be a martingale with $\mathbb{E}[X_n] = 0$ and define the martingale differences $Y_n = X_n - X_{n-1}$ for $n \geq 1$, with $X_0 = 0$. Set
\[
s_n^2 = \sum_{i=1}^n \mathbb{E}[Y_i^2 \mid \mathcal{F}_{i-1}]
\] and for $x>1$, set
$
L(x) = \max\{1, \ln \ln x\}.
$
If $s_n^2 \to \infty$ a.s. and if for some sequence $(\alpha_n)$ of positive reals with $\alpha_n \to 0$,
\begin{equation}
|Y_n| \le \frac{\alpha_n s_n}{\sqrt{L(s_n^2)}} \quad \text{a.s.},
\label{eq:KolmogorovCondition}
\end{equation}
then
\[
\limsup_{n \to \infty} \frac{X_n}{\sqrt{s_n^2 L(s_n^2)}} = \sqrt{2} \quad \text{a.s.}
\]

In \cite{zeng2015kolmogorov}, the author extended  Stout's result to non-commutative setup. In this setting, we assume that $\CM$ is a finite von Neumann algebra with a faithful, normal (f.n. henceforth) tracial state $\tau$. Furthermore, let $(\CM_k)_{k \in \N}$ be an increasing sequence of von Neumann subalgebras with the trace preserving conditional expectations $(E_k)_{k \in \N}$, where $E_k: \CM \to \CM_k$ with the property $E_n \circ E_m= E_{\min\{m,n\}}$. Let $(x_k)_{k \in \N}$ be a sequence of non-commutative martingales in $\CM$ and define $x_0=0$, and let $d_k:= x_k - x_{k-1}$ be the associated martingale differences. Further, for all $n \in \N$ define,
\[
s_n^2 = \left\| \sum_{k=1}^n E_{k-1}(d_k^2) \right\|_\infty, \quad u_n = \sqrt{L(s_n^2)}.
\]
Then, Zeng proved the following theorem. For other relevant definitions we refer to \Cref{sec: prelim}.
\begin{thm}\label{thm: zeng}
Let $(x_n)_{n \in \N \cup {0}}$ be a sequence of self-adjoint martingales in $\CM$ with $x_0=0$. Further suppose that $s_n^2 \to \infty$ and $\|d_n\|_\infty \le \alpha_n s_n / u_n$ for some sequence $(\alpha_n)$ of positive numbers with $\alpha_n \to 0$. Then
\[
\limsup_{n \to \infty} \frac{x_n}{s_n u_n} \underset{a.u.}{\leq } 2.
\]
\end{thm}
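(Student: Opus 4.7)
The plan is to follow the classical scheme for Stout's LIL, replacing each classical ingredient with its non-commutative counterpart. The three key tools are: a Bernstein-type exponential tail estimate for non-commutative martingales (the ``key exponential inequality'' referenced in the abstract, in the form of Junge--Zeng / Randrianantoanina), a Doob-type maximal inequality to compare partial sums within a block of indices, and the non-commutative Borel--Cantelli lemma stating that countably many projections with summable defects admit a common projection carrying each of them to a large corner.

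First, I would set up a geometric subsequence. Fix $\theta>1$ and set $n_k=\min\{n\,:\,s_n^2\geq \theta^k\}$; since $s_n^2\to\infty$ this is eventually well defined, and because consecutive values of $s_n^2$ differ by at most $\|d_n\|_\infty^2\leq \alpha_n^2 s_n^2/u_n^2 = o(s_n^2)$, one has $\theta^k\leq s_{n_k}^2\leq \theta^{k+1}(1+o(1))$. Apply the exponential inequality to $x_{n_k}$: with $a=(2+\varepsilon)s_{n_k}u_{n_k}$ and uniform difference bound $K\leq \alpha_{n_k}s_{n_k}/u_{n_k}$, one obtains a spectral projection $p_k$ of $x_{n_k}$ with $p_k x_{n_k}p_k\leq a\, p_k$ and
\[
\tau(1-p_k)\leq 2\exp\!\Big(-\tfrac{a^2}{2s_{n_k}^2+\tfrac{2}{3}Ka}\Big).
\]
The $Ka$-term is $o(s_{n_k}^2)$ because $\alpha_{n_k}\to 0$, so the exponent is essentially $-(2+\varepsilon)^2 u_{n_k}^2/2$. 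Since $u_{n_k}^2\sim \ln k$, the defects are summable in $k$, and Borel--Cantelli provides, for each $\varepsilon>0$, a projection $P_\varepsilon=\bigwedge_{k\geq N}p_k$ of arbitrarily small defect satisfying $P_\varepsilon x_{n_k}P_\varepsilon\leq (2+\varepsilon)s_{n_k}u_{n_k}\,P_\varepsilon$ for all $k\geq N$.

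To upgrade from the subsequence to every $n$, I would apply the same exponential inequality to the martingale increments $(x_n-x_{n_k})_{n_k\leq n<n_{k+1}}$, whose conditional quadratic variation is bounded by $s_{n_{k+1}}^2-s_{n_k}^2\leq (\theta-1)s_{n_k}^2(1+o(1))$. Taking $a=\varepsilon s_{n_k}u_{n_k}$ and using a maximal version of the inequality produces a projection $q_k$ with summable defect such that
\[
q_k(x_n-x_{n_k})q_k\leq \varepsilon\, s_{n_k}u_{n_k}\, q_k\qquad\text{for every }n_k\leq n<n_{k+1},
\]
provided $\theta$ is close enough to $1$. Intersecting $P_\varepsilon$ with $\bigwedge_{k\geq N}q_k$ yields a single projection on which $x_n\leq (2+O(\varepsilon+\sqrt{\theta-1}))\,s_n u_n$ for every large $n$. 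Running $\varepsilon\downarrow 0$ along a countable scheme and $\theta\downarrow 1$ and taking a final meet delivers the almost uniform bound $\limsup x_n/(s_nu_n)\leq 2$.

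The main obstacle is the \emph{maximal} step above: in the non-commutative setting one cannot directly exponentiate $\max_{n_k\leq n<n_{k+1}}x_n$ because the $x_n$ need not commute and no scalar maximum is available. The standard route is either a non-commutative Doob-type projection (Cuculescu's construction) built from the spectral projections of $(x_n-x_{n_k})$, or a stopping-time variant of the exponential inequality that outputs a projection valid simultaneously on the whole block. Verifying that such a maximal exponential estimate applies with the sparsity $\theta^k$ of the subsequence, keeping the defects summable, is where most of the technical work lies; the remainder is a direct transcription of the scalar LIL argument.
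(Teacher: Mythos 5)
Your high-level plan mirrors the classical LIL proof, and several pieces (the geometric subsequence, the Borel--Cantelli reduction, the Bernstein-type exponential estimate) line up with the paper's proof of \Cref{thm: main thm}. But there are two places where the proposal, as written, would not go through, and one of them is precisely the issue the paper was written to correct.

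\textbf{The uniform difference bound is unjustified.} You invoke the exponential inequality for $x_{n_k}$ with ``uniform difference bound $K\leq \alpha_{n_k}s_{n_k}/u_{n_k}$.'' But the exponential inequality needs a bound on $\|d_j\|$ for \emph{all} $j\leq n_k$, i.e.\ on $\sup_{j\leq n_k}\alpha_j s_j/u_j$. Since $(\alpha_j)$ is only assumed to tend to zero — it need not be monotone, and $\alpha_j s_j/u_j$ need not be monotone either — this supremum is not controlled by $\alpha_{n_k}s_{n_k}/u_{n_k}$. This is exactly the inaccuracy the paper identifies in Zeng's original argument, and it is repaired by \Cref{prop: seq lemma}: one replaces $(\alpha_n)$ by a dominating sequence $(\alpha_n')$ that is \emph{decreasing} to zero while $(\alpha_n' s_n/u_n)$ is \emph{increasing}, so $\sup_{j\leq n_k}\|d_j\|\leq\alpha_{n_k}' s_{n_k}/u_{n_k}$ with $\alpha_{n_k}'\to 0$. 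Without this (or an equivalent) reduction, the key hypothesis of the exponential lemma fails to hold as you apply it.

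\textbf{The maximal step is genuinely open in your scheme, and the paper's route avoids it entirely.} You correctly flag that producing a single projection $q_k$ with $q_k(x_n-x_{n_k})q_k\leq \varepsilon\,s_{n_k}u_{n_k}\,q_k$ \emph{simultaneously} for all $n_k\leq n<n_{k+1}$ is the hard point, and you suggest a Cuculescu stopping-time or a ``maximal exponential inequality'' as the fix. The paper does something structurally different: instead of trying to control the whole block by a single spectral projection coming out of an exponential estimate, it (1) applies the non-commutative Chebyshev inequality in $L_p(\ell_\infty^c)$ (\Cref{lem: Chebyshev inequality lemma}), (2) controls the $L_p(\ell_\infty^c)$ norm of the whole block by the $L_p$ norm of the single endpoint $x_{k_{n+1}}$ via Junge's asymmetric Doob maximal inequality (\Cref{doob}), and then (3) bounds $\|x_{k_{n+1}}\|_p^p$ by $\tau(e^{\lambda x_{k_{n+1}}}+e^{-\lambda x_{k_{n+1}}})$ using $|u|^p\leq p^p e^{-p}(e^u+e^{-u})$, at which point the exponential inequality (\Cref{thm: exp ineq}) is applied \emph{once}, to a single operator, with $p$ and $\lambda$ then optimized. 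This sidesteps the need for any ``maximal'' version of the exponential bound. By contrast, intersecting spectral projections across the whole block costs a union bound whose defect is block-length times the individual defect, which is not summable; Cuculescu projections would need a dedicated deviation estimate that is not readily available and whose constants are not obviously good enough. So the maximal step in your proposal is not a routine transcription — it is a missing lemma, and the paper's $L_p(\ell_\infty^c)$/Doob route is the device that replaces it.

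As a side remark: with the Bernstein-type bound you quote, your choice $a=(2+\varepsilon)s_{n_k}u_{n_k}$ is wasteful — $a=\sqrt{2}(1+\varepsilon)s_{n_k}u_{n_k}$ already gives a summable tail, which is why, once the exponential inequality is sharpened (as the paper does via Randrianantoanina's triple Golden--Thompson inequality), the optimal constant $\sqrt{2}$ of \Cref{thm: main thm} comes out rather than the $2$ of \Cref{thm: zeng}.
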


First, we observe that in this result the bound is $2$ rather than
$\sqrt{2}$, which seems inconsistent with the classical case. To
establish \Cref{thm: zeng}, the author invokes an exponential
inequality from \cite[Lemma~2.2]{junge2015noncommutative}. However,
its application to \cite[Theorem~1]{zeng2015kolmogorov} is
slightly incorrect. To address this issue, we begin by
constructing an appropriate sequence in \Cref{prop: seq lemma}. We
then refine the exponential inequality using results by
Randrianantoanina \cite{randrianantoanina2024triple}, and by employing
a standard technique we extend \Cref{thm: zeng} to the
non-self-adjoint martingale case. This yields a version consistent with the classical setting. Our main result is stated
below.

\begin{thm}
    Let $(x_n)_{n \geq 0}$ be a non-commutative martingale in $(\mathcal{M}, \tau)$ with $x_0 = 0$. Define 
    $$
    s_n^2=\max\left\{\norm{\sum_{i=1}^n E_{i-1}(d_i^*d_i)}_\infty,\norm{\sum_{i=1}^n E_{i-1}(d_id_i^*)}_\infty\right\} \text{ and } u_n=L(s_n^2)^{1/2}.
    $$
    Suppose that $s_n^2 \to \infty$ and $\norm{d_n}_\infty \leq \frac{\alpha_n s_n}{u_n}$, where $(\alpha_n)$ is a sequence of positive numbers such that $\alpha_n \to 0$, then 
    $$
    \limsup_{n \to \infty} \frac{x_n}{s_n u_n} \underset{a.u.}{\leq }\sqrt{2}.
    $$
  \end{thm}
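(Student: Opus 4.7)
The plan is to follow Zeng's strategy~\cite{zeng2015kolmogorov} with two essential modifications: replacing the Junge--Zeng exponential inequality by the sharper version of Randrianantoanina~\cite{randrianantoanina2024triple}, which tightens the variance factor in the exponent and converts the constant $2$ into $\sqrt{2}$; and using the subsequence construction in \Cref{prop: seq lemma} to repair the gap in Zeng's original argument.

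My first step is to reduce to the self-adjoint case via the standard $2\times 2$ matrix amplification. Set
\[
X_n = \begin{pmatrix} 0 & x_n \\ x_n^* & 0 \end{pmatrix} \in M_2(\CM)
\]
with respect to the filtration $(M_2(\CM_n))$ and the normalized trace $\tilde{\tau}=\tfrac12(\tau\otimes\tr)$. This is a self-adjoint martingale with differences $D_n=\begin{pmatrix}0&d_n\\ d_n^*&0\end{pmatrix}$ satisfying $D_n^2=\operatorname{diag}(d_nd_n^*,\,d_n^*d_n)$. Hence
\[
\norm{\sum_{i=1}^n \tilde{E}_{i-1}(D_i^2)}_\infty = \max\left\{\norm{\sum_{i=1}^n E_{i-1}(d_id_i^*)}_\infty,\,\norm{\sum_{i=1}^n E_{i-1}(d_i^*d_i)}_\infty\right\}=s_n^2,
\]
and $\|D_n\|_\infty=\|d_n\|_\infty$, so the hypotheses of the theorem transfer verbatim, and an almost uniform bound on $X_n$ delivers the one on $x_n$ through the off-diagonal block.

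The second step is to apply Randrianantoanina's refined exponential inequality to the self-adjoint martingale $X_n$. Under $\|D_n\|_\infty\le\alpha_n s_n/u_n$, it produces a tail estimate essentially of the form
\[
\tilde{\tau}\!\left(\mathbbm{1}_{(\lambda,\infty)}(|X_n|)\right)\le 2\exp\!\left(-\frac{\lambda^2}{2s_n^2+c\,\alpha_n\lambda s_n/u_n}\right),
\]
where the key improvement is the factor $2$ in the denominator, replacing the $4$ that comes out of~\cite{junge2015noncommutative}. Then I would invoke \Cref{prop: seq lemma} to extract a subsequence $(n_k)$ along which $s_{n_k}^2$ grows geometrically, say like $\theta^k$, with bounded ratios $s_{n_{k+1}}/s_{n_k}$. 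Fixing $\epsilon>0$ and taking the threshold $\lambda_k=(\sqrt{2}+\epsilon)s_{n_k}u_{n_k}$, the exponent above is at most $-(1+\delta)L(s_{n_k}^2)\sim -(1+\delta)\ln k$ for some $\delta>0$ and all large $k$, so
\[
\sum_{k}\tilde{\tau}\!\left(\mathbbm{1}_{(\lambda_k,\infty)}(|X_{n_k}|)\right)<\infty.
\]
A non-commutative Borel--Cantelli type argument then supplies, for each $\epsilon$, a projection $p$ with $\tilde{\tau}(1-p)<\epsilon$ and $\|pX_{n_k}p\|_\infty\le\lambda_k$ for all sufficiently large $k$.

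The main technical obstacle is the interpolation from the subsequence to all $n$: one has to control the fluctuation $\max_{n_{k-1}<n\le n_k}\|X_n-X_{n_{k-1}}\|$ so that, after compression by a projection of small $\tilde{\tau}$-defect, it does not exceed $\epsilon\,s_{n_k}u_{n_k}$. This step requires a non-commutative martingale maximal inequality (of Cuculescu or Junge--Xu type~\cite{junge2002doob}) combined with the exponential tail estimate applied block by block. Once this is in hand, intersecting the resulting projections over a countable sequence $\epsilon_m\downarrow 0$ yields the asserted almost uniform $\limsup$ bound $\sqrt{2}$.
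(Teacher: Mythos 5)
Your $2\times 2$ amplification step matches the paper's Theorem~\ref{trick} exactly, and the overall strategy (exponential inequality plus a blocking/Borel--Cantelli argument) is right. However, there is a concrete misreading of \Cref{prop: seq lemma}. You write that you would invoke it ``to extract a subsequence $(n_k)$ along which $s_{n_k}^2$ grows geometrically,'' but that proposition has nothing to do with extracting a subsequence: it produces a modified \emph{scalar} sequence $(\alpha_n')$ dominating $(\alpha_n)$, decreasing to zero, with $(\alpha_n'\beta_n)$ non-decreasing. The geometric blocking sequence is defined directly in the proof by $k_n=\inf\{j: s_{j+1}^2\geq \eta^{2n}\}$, independently of \Cref{prop: seq lemma}. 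The actual purpose of \Cref{prop: seq lemma} is precisely the inaccuracy the paper repairs in Zeng's proof: the exponential inequality (\Cref{thm: exp ineq}) needs one uniform bound $\|d_l\|\leq M$ for \emph{all} $1\leq l\leq k_{n+1}$, and the naive choice $M=\alpha_{k_{n+1}}s_{k_{n+1}}/u_{k_{n+1}}$ fails since $(\alpha_n s_n/u_n)$ need not be monotone. The monotonized $(\alpha_n')$ yields $\|d_l\|\leq \alpha_l' s_l/u_l\leq \alpha_{k_{n+1}}'s_{k_{n+1}}/u_{k_{n+1}}$ for all $l\leq k_{n+1}$, so $M$ can be taken as $\alpha_{k_{n+1}}'/u_{k_{n+1}}^2$ for the rescaled martingale. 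Your proposal never establishes this uniform bound, so the application of the exponential inequality is not justified as written.

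A second, smaller divergence: you frame the ``interpolation from the subsequence to all $n$'' as an extra step requiring a separate maximal inequality to control $\max_{n_{k-1}<n\leq n_k}\|X_n-X_{n_{k-1}}\|$. The paper's route is cleaner: the column tail probability $\operatorname{Prob}_c$ already quantifies uniformly over the whole block $k_n<m\leq k_{n+1}$, and this supremum is bounded via the non-commutative Chebyshev inequality in $L_p(\ell_\infty^c)$ (\Cref{lem: Chebyshev inequality lemma}) together with the asymmetric Doob maximal inequality (Theorem~\ref{doob}), with $p$ then optimized as $p=\lambda\sqrt2(1+\delta)$ against the Laplace transform bound $|u|^p\leq p^p e^{-p}(e^u+e^{-u})$. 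Your separate two-step control (tail bound at endpoints, then a block maximal bound) is also plausible in principle but is left entirely as a sketch and is not what the paper does; in particular, the distribution-tail bound $\tilde\tau(\mathbbm 1_{(\lambda,\infty)}(|X_n|))$ alone does not deliver the single projection compressing all $x_m$ in the block, which is exactly what $\operatorname{Prob}_c$ is designed to handle.
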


As already pointed out in \cite{zeng2015kolmogorov}, one cannot have equality for general non-commutative random variables because of the free CLT \cite{Voi}. In some sense, the commutative situation is the worst possible for the LIL.

Next, we use this martingale analogue of LIL to deduce a non-commutative analogue of independent random variable version due to Hartman and Wintner \cite{hartman1941law}. They stated that if $(y_i)$ is an i.i.d. sequence of real, centered, square-integrable random variables with variance $\sigma^2$, then 
\begin{align*}
    \limsup_{n \to \infty} \frac{\sum_1^n y_i}{\sqrt{n L(n)}}= \sqrt 2 \sigma \quad a.s.
\end{align*}
A first non-commutative analogue of this result was  established in \cite[Theorem 3.23]{konwerska2008law}, where the author obtained a similar conclusion without really relying on martingales.  We emphasize that the result there concerns bounds in the bilaterally almost uniform (b.a.u.) sense, whereas \cite[Theorem 1]{zeng2015kolmogorov} provides bounds in the almost uniform (a.u.) sense. More recently, \cite{hong2024failure} exhibited examples of non-commutative probability spaces $(\CM, \tau)$ and martingale sequences in $L_p(\CM, \tau)$ for $1 \leq p <2$ that fail to converge almost uniformly. This makes it natural to investigate a.u. bounds for the limits of such sequences. In this article, our approach (cf. \Cref{thm: main them with independent rv}), in the spirit of \cite{hartman1941law} or \cite{deAcosta},   employs martingale exponential inequalities, allowing us to derive a sharper bound in the almost uniform sense rather than in the bilaterally almost uniform one.

The structure of the paper is as follows. Section~\ref{sec: prelim} reviews the necessary background on non-commutative $L_p$-spaces, conditional expectations, and boundedness concepts used in our analysis. Section~\ref{sec: main sec} presents the detailed proof of the main theorem. We conclude with an extension of our main result as in \cite{stout1970martingale}. Next, in Section~\ref{sec: lil for indep rv}, we establish the non-commutative LIL for independent random variables.

\section{Preliminaries}\label{sec: prelim}
Throughout this section, we assume that $\CM (\subseteq \CB(\h))$ is a finite von Neumann algebra with f.n. tracial state $\tau$, where $\h$ is assumed to be a separable Hilbert space. A closed, densely defined operator $x: \CD(x) \subseteq \h \to \h$ is called affiliated to $\CM$ if $xu \subset ux$ for all unitaries in $\CM'$, the commutant of $\CM$. An operator $x$, which is affiliated to $\CM$ is called $\tau$-measurable if for all $\epsilon>0$ there exists a projection $e$ in $\CM$ such that $\tau(1-e)< \epsilon$ and $\norm{xe}< \infty$. Since $\tau$ is finite, it is observed that every closed, densely defined operator that is affiliated to $\CM$ is $\tau$-measurable. The $*$-algebra of closed, densely defined operator, which are affiliated to $\CM$ is denoted by $\widetilde{\CM}$. Throughout this article, the set of all projections will be denoted by $\CP(\CM)$.

\begin{defn}\label{defn: auleq}
Let $(x_n)_{n \in \N}$ be a sequence in $\widetilde{\CM}$ and $K \geq 0$. We say $\limsup_{n \to \infty} x_n \auleq K$, that is,  $(x_n)_n$ is almost uniformly (a.u.) bounded by $K$ if for all $\epsilon, \delta>0$, there exists $p \in \CP(\CM)$ such that $\tau(1-p)< \epsilon$ and 
\[
\limsup_{n \to \infty} \norm{x_np} \leq K + \delta.
\]
\end{defn}

\begin{defn}\label{defn: auconv}
Let $(x_n)_{n \in \N}$ be a sequence in $\widetilde{\CM}$ and $x \in \widetilde{\CM}$. We say  $(x_n)_n$ is almost uniformly (a.u.) convergent to $x$ if for all $\epsilon>0$ there exists $p \in \CP(\CM)$ such that $\tau(1-p)< \epsilon$ and 
\[
\limn \norm{(x_n-x)p}=0.
\]
\end{defn}

Our next remark concludes that the definition of the almost uniform boundedness and the almost uniform convergence is independent of the ambient von Neumann algebra. 

\begin{rem}\label{rem: indep of ambient vna}
Let $\CN$ be a von Neumann subalgebra of $\CM$. Let $(x_n)$ be a sequence in $\CN$ with $\limsup_{n \to \infty} x_n \auleq K $ in $\CM$ for some constant $K>0$. Let $\epsilon, \delta>0$. We first choose $0< \eta <1$ such that $(K+ \frac{\delta}{2})/ (1-\eta) \leq K+ \delta$. Then, there exists $p \in \CP(\CM)$ such that $\tau(1-p)< \eta \epsilon$ and 
\[
\limsup_{n \to \infty} \norm{x_np} \leq K + \frac{\delta}{2}.
\]
We claim that there exists $q \in \CP(\CN)$ such that $\tau(1-q)< \epsilon$ and 
\[
\limsup_{n \to \infty} \norm{x_nq} \leq K + \delta.
\]

Consider $p':= E(p)$, where $E$ is the faithful, normal conditional expectation from $\CM$ onto $\CN$. Now, consider $q:= 1_{[1-\eta, 1]}(p')\in \CN$. Next, observe that $1- p' \geq \eta (1-q)$ and
\[
\eta \epsilon \geq \tau(1-p)= \tau(1-p') \geq \eta \tau(1-q).
\]
On the other hand, since $E(p)\geq (1-\eta) q$, we have \[
(1-\eta)\norm{x_nq}^2=(1-\eta)\norm{x_nqx_n^*} \leq  \norm{x_n E(p) x_n^*}=\norm{E(x_n p x_n^*)} \leq  \norm{x_np}^2.
\]
Therefore, we obtain $\tau(1-q) \leq \epsilon$ and
\[
\limsup_{n \to \infty} \norm{x_nq} \leq \frac{1}{1-\eta} \limsup_{n \to \infty} \norm{x_np} \leq K+ \delta.
\]
A similar argument also holds true for almost uniform convergence.
\end{rem}

We state the following lemma, which will be used repeatedly in the subsequent results. However, the proof is straightforward and is left to the reader.

\begin{lem}\label{lem: au convergence and bounded lemma}
    Let $(x_n)$ and $(y_n)$ be two sequences in a von Neumann algebra $\CM$ such that $\limsup\limits_{n \to \infty} x_n \auleq K$ and $y_n\xrightarrow{a.u} 0$ for some constant $K>0$, then $$\limsup_{n \to \infty} \left(x_n+y_n\right) \auleq K.$$
\end{lem}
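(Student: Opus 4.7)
The statement asserts that almost-uniform boundedness is stable under almost-uniform perturbations, so the natural strategy is to unpack the two definitions and combine the witnessing projections via a meet. Given $\varepsilon, \delta > 0$, the plan is to produce a single projection $p \in \CP(\CM)$ with $\tau(1-p) < \varepsilon$ and $\limsup_n \|(x_n + y_n)p\| \leq K + \delta$, by taking $p$ to be the meet of two projections, one controlling $x_n$ and one controlling $y_n$.

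Concretely, applying Definition~\ref{defn: auleq} to $(x_n)$ with the tolerance pair $(\varepsilon/2, \delta)$, I would obtain $p_1 \in \CP(\CM)$ with $\tau(1-p_1) < \varepsilon/2$ and $\limsup_n \|x_n p_1\| \leq K + \delta$. Applying Definition~\ref{defn: auconv} to $(y_n)$ with tolerance $\varepsilon/2$, I would obtain $p_2 \in \CP(\CM)$ with $\tau(1-p_2) < \varepsilon/2$ and $\lim_n \|y_n p_2\| = 0$. Setting $p := p_1 \wedge p_2$, a standard estimate in finite von Neumann algebras gives
\[
\tau(1-p) = \tau\big((1-p_1) \vee (1-p_2)\big) \leq \tau(1-p_1) + \tau(1-p_2) < \varepsilon.
\]

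The key routine observation is that $p \leq p_1$ implies $p_1 p = p$, so $x_n p = (x_n p_1) p$ and hence $\|x_n p\| \leq \|x_n p_1\|$; similarly $\|y_n p\| \leq \|y_n p_2\|$. Therefore
\[
\limsup_{n \to \infty} \|(x_n + y_n)p\| \leq \limsup_{n \to \infty} \|x_n p_1\| + \lim_{n \to \infty} \|y_n p_2\| \leq K + \delta,
\]
which is exactly the inequality required by Definition~\ref{defn: auleq}. There is essentially no hard step here; the only mild subtlety is the use of $\tau((1-p_1)\vee(1-p_2)) \leq \tau(1-p_1) + \tau(1-p_2)$ to control the trace of $1-p$, which is immediate from subadditivity of the trace on projections in a finite von Neumann algebra.
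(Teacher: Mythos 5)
Your argument is correct and is precisely the standard argument the authors have in mind; the paper itself omits the proof, calling it straightforward and leaving it to the reader. Intersecting the two witnessing projections and using $\tau\big((1-p_1)\vee(1-p_2)\big)\le\tau(1-p_1)+\tau(1-p_2)$ together with $\|a\,p\|\le\|a\,p_i\|$ for $p\le p_i$ is exactly what is required.
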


 We will need a version of the Kronecker lemma for the a.u. convergence.
   \begin{lem}\label{lem: Kronecker2}
    Let $(x_n)$ be a sequence in $\widetilde{\CM}$, $(\alpha_n)_n$ an increasing sequence of positive real numbers, which diverges to $+\infty$. Then
    $$
    \sum_{i=1}^n \frac{x_i}{\alpha_i} \text{ converges a.u. }  
    \quad \Rightarrow \quad 
     \frac{1}{\alpha_n} \sum_{i=1}^n x_i  \text{ converges a.u. to }   0.
     $$
       \end{lem}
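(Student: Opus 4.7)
The plan is to adapt the classical proof of Kronecker's lemma to the almost uniform setting, the only noncommutative ingredient being to fix a single projection on which norm convergence holds.

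First I would set $s_n=\sum_{i=1}^n x_i/\alpha_i$ and apply Abel summation. Writing $x_i=\alpha_i(s_i-s_{i-1})$ (with $s_0=0$) and $a_i=\alpha_{i+1}-\alpha_i\ge 0$, a direct computation gives the identity
\[
\frac{1}{\alpha_n}\sum_{i=1}^n x_i \;=\; s_n \;-\; \frac{1}{\alpha_n}\sum_{i=1}^{n-1} a_i\, s_i .
\]
This reduces the problem to showing that the weighted average on the right has the same a.u.\ limit as $s_n$.

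Next I would use the hypothesis that $s_n$ converges a.u.\ to some $s\in\widetilde{\CM}$: given $\varepsilon>0$, choose a projection $p\in\CP(\CM)$ with $\tau(1-p)<\varepsilon$ such that $\norm{(s_n-s)p}\to 0$. Then $s_i p\to sp$ in operator norm, in particular $M:=\sup_i\norm{(s_i-s)p}<\infty$. Given $\varepsilon'>0$, pick $N$ with $\norm{(s_i-s)p}<\varepsilon'$ for $i\ge N$. Using $\sum_{i=1}^{n-1} a_i=\alpha_n-\alpha_1$ and the nonnegativity of the $a_i$,
\[
\left\|\frac{1}{\alpha_n}\sum_{i=1}^{n-1} a_i (s_i-s) p\right\|
\le \frac{M(\alpha_N-\alpha_1)}{\alpha_n}+\frac{\alpha_n-\alpha_N}{\alpha_n}\varepsilon'.
\]
Since $\alpha_n\to\infty$, the $\limsup$ in $n$ is at most $\varepsilon'$, hence is $0$ as $\varepsilon'$ was arbitrary. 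Combined with $\frac{1}{\alpha_n}\sum_{i=1}^{n-1}a_i=\frac{\alpha_n-\alpha_1}{\alpha_n}\to 1$, this gives $\frac{1}{\alpha_n}\sum_{i=1}^{n-1} a_i s_i p \to sp$ in norm.

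Subtracting from $s_n p\to sp$ and using the Abel identity yields $\bigl\|\frac{1}{\alpha_n}\sum_{i=1}^n x_i\,p\bigr\|\to 0$, so $\frac{1}{\alpha_n}\sum_{i=1}^n x_i$ converges a.u.\ to $0$. There is no real obstacle here; the only point requiring mild care is that the limit $s$ is a priori only $\tau$-measurable, but once we compress by the projection $p$ on which a.u.\ convergence holds, everything takes place in $\bh$ with the ordinary operator norm, and the classical Toeplitz/Cesàro argument applies verbatim.
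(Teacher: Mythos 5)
Your argument follows the same Abel-summation plus Ces\`aro route as the paper, but there is a genuine gap at the step where you set $M:=\sup_i\norm{(s_i-s)p}$. The projection $p$ furnished by almost uniform convergence only guarantees that $\norm{(s_n-s)p}\to 0$, hence that $(s_n-s)p$ is bounded for all sufficiently large $n$; for finitely many small indices $i$ the operator $(s_i-s)p$ is a priori only $\tau$-measurable, and $\norm{(s_i-s)p}$ can be $+\infty$. In that case $M=\infty$ and your bound $\frac{M(\alpha_N-\alpha_1)}{\alpha_n}+\frac{\alpha_n-\alpha_N}{\alpha_n}\varepsilon'$ collapses. Your closing remark that ``once we compress by the projection $p$ everything takes place in $\bh$'' is precisely the point that fails: compression by the $p$ from the a.u.\ definition does not force $s_ip,\,sp$ into $\CM$ for all $i$.

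The paper's proof closes this by an extra projection: since only finitely many terms can be unbounded (say those with index below some $n_0$) and each $S-S_k$ lies in $\widetilde{\CM}$, one chooses another projection $q$ with $\tau(1-q)<\epsilon$ making $(S-S_k)q$ bounded for $k\leq n_0$, and replaces $p$ by $p'=p\wedge q$, which satisfies $\tau(1-p')\leq 2\epsilon$ and now has $\sup_k\norm{(S-S_k)p'}<\infty$ together with $\norm{(S-S_n)p'}\to 0$. With this correction your Toeplitz estimate goes through verbatim, so the fix is small but necessary for the argument to be valid.
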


   \begin{proof}
Let $S_n=\sum_{k=1}^n \frac {x_k}{\alpha_k}$ and set $\beta_k=\alpha_k-\alpha_{k-1}$ ($\alpha_{-1}=0)$. Then $\sum_{k=1}^n \beta_k=\alpha_n\to \infty$. We denote $y_n=\frac{\sum_{k=1}^n \beta_k S_k}{\alpha_n}$.

     Using an Abel transform, we have
     $\frac 1 {\alpha_{n+1}}\sum_{k=1}^{n+1} x_k=S_{n+1}+\frac{\alpha_1S_1}
     {\alpha_{n+1}}-y_n$. As $x_1\in \widetilde{\CM}$, $\frac{\alpha_1S_1}
     {\alpha_{n+1}}\to_{a.u.}0$, thus we only need to show that $\lim_{a.u.}y_n=\lim_{a.u.}S_n=S$.

     Let $\epsilon>0$, then there is some projection $p\in\CP(\CM)$,
     with $\tau(1-p)<\epsilon$ and $n_0\in \N$ so that
     $\lim_n \|(S-S_n)p\|=0$. We may find $n_0\in \N$ so that for
     $n\geq n_0$, $\|(S-S_n)p\|<\infty$.  As
     $S,S_1,...,S_{n_0}\in \widetilde{\CM}$, there is another projection
     $q$ with  $\tau(1-q)<\epsilon$ and $\|(S-S_k)q\|<\infty$ for $k\leq n_0$.
     Set $p'=p\wedge q$, so that $(S-S_k)p'\in \CM$ for all $k\in \N$ and
     $\lim_n \|(S-S_n)p'\|=0$. By the Cesaro theorem in $\CM$, we get
     $\lim_n \|(S-y_n)p'\|=0$ and $\tau(1-p')\leq 2\epsilon$. This gives the desired a.u. convergence.    
   \end{proof}
   
Let $x \in \widetilde{\CM}$ and $t\in(0,1)$, then the generalised $s$-number $\mu_t(x)$ is defined by
$$
\mu_t(x):= \inf \{ s \ge 0: \tau(1_{(s, \infty)}(\abs{x})) \leq t \}.
$$

We will now discuss the definitions of independent random variables in $\CM$. Let $\CM_1, \CM_2$ be two subalgebras of $\CM$. 
\begin{defn}
    We say that $\CM_1$ and $\CM_2$ are independent if 
    $$
    \tau(xy)= \tau(x) \tau(y)
    $$
    for all $x \in \CM_1$ and $y \in \CM_2$. Moreover, a sequence of random variables $(x_i)_{i \geq 1}$ is said to be independent if for all $i\geq 1$ the von Neumann algebras  $\CM_i:= \{1, x_1,...,x_i \}''$ and ${x_{i+1}}''$ are independent.
\end{defn}

Now let $(\CM_n)_{n \in \N}$ be an increasing sequence of von Neumann subalgebras of $\CM$ such that $\cup_{n \in \N} \CM_n$ is $w^*$-dense in $\CM$. Then for all $n \in \N$ there exists a normal  trace preserving conditional expectation $E_n: \CM \to \CM_n$, satisfying $E_m \circ E_n= E_{\min\{m,n\}}$ for all $m,n \in \N$. A sequence $(x_n)_{n \in \N}$ in $\CM$ is called a martingale sequence with respect to $(\CM_n)_{n \in \N}$ if $E_n(x_{n+1})= x_n$ for all $n \in \N$. For a martingale sequence $(x_n)_{n \in \N}$, we write the martingale differences by $dx_n:= x_n- x_{n-1}$ for all $n \in \N$ (with $x_{-1}=0$). In this setting, without loss of generality, we assume that $\CM_0 = \C$ and $E_0$ denotes the trivial conditional expectation from $\CM$ onto $\CM_0$. Throughout, martingale differences will be denoted by $(dx_n)_{n \in \N}$, which implies $dx_1=x_1$ when $x_0=0$.

Let $1 \leq p< \infty$. The non-commutative $L_p$-space associated to $(\CM, \tau)$ is defined as 
\[L_p(\CM, \tau):= \{x \in \widetilde{\CM}: \tau(\abs{x}^p)< \infty \}
\]
with the Banach space norm $\norm{x}_p:= \tau(\abs{x}^p)^{1/p}$.
The $\ell_\infty$-column valued non-commutative $L_p$-space is defined as 
\[
L_p(\ell_\infty^c):= \{(x_n)_{n \in \N} \subset L_p(\CM, \tau): \text{ there exist } a \in L_{p}(\CM, \tau) \text{ and } (y_n )_{n \in \N} \subset \CM \text{ such that } x_n=  y_n a \}
\]
with associated Banach space norm (when $p\geq 2$) defined by \[
\norm{(x_n)_{n \in \N}}_{L_p(\ell_\infty^c)}:= \inf \{  \sup_{n \in \N} \norm{y_n} \norm{a}_{p} \},
\]
where the infimum is taken over all such factorization as above.

The next statement is a non-commutative, asymmetric analogue of Doob’s maximal inequality, originally established by Junge in \cite{junge2002doob}.

\begin{thm}\label{doob}
   Let $4 \le p \le \infty$. For every $L_p(\mathcal{M})$-bounded martingale
  $x=(x_n)_{n \in \N}$:
 $$\norm{(x_n)_{n \in \N}}_{L_p(\ell_\infty^c)}\leq 2^{2/p}
    \sup_{n\in \N}\|x_n\|.$$    
 \end{thm}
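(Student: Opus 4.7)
The plan is to interpolate between the trivial endpoint $p=\infty$ and a base case $p=4$, with the crucial reduction being the operator Kadison--Schwarz inequality.

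First I would use the standard factorization description of the column space, valid for $p\ge 2$:
\[
\|(a_n)\|_{L_p(\ell_\infty^c)}=\inf\{\|b\|_p : b\in L_p(\CM)_+,\ a_n^*a_n\le b^2 \text{ for all } n\}.
\]
This reduces the problem to producing, for each martingale, a single positive ``dominator'' $b$ in $L_p(\CM)$. Assuming WLOG that $(x_n)$ converges in $L_p$ to some $x_\infty$ with $\|x_\infty\|_p=\sup_n\|x_n\|_p$, Kadison--Schwarz applied to the conditional expectations gives
\[
x_n^*x_n=|E_n(x_\infty)|^2\le E_n(|x_\infty|^2).
\]
So it suffices to find a positive $c\in L_{p/2}(\CM)$ with $c\ge z_n:=E_n(|x_\infty|^2)$ for all $n$ and $\|c\|_{p/2}\le K\|z_\infty\|_{p/2}$, where $z_\infty=|x_\infty|^2$; then $b=c^{1/2}$ satisfies $\|b\|_p=\|c\|_{p/2}^{1/2}\le K^{1/2}\|x_\infty\|_p$. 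The problem thus becomes a non-commutative Doob maximal inequality for the \emph{positive} $L_{p/2}$-bounded martingale $(z_n)$, with $p/2\ge 2$.

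Next I would treat the two endpoints. At $p=\infty$, simply take $c=\|z_\infty\|_\infty\cdot 1$; since $z_n=E_n(z_\infty)\le\|z_\infty\|_\infty\cdot 1$, the constant is $1=2^{2/\infty}$. At $p=4$ (so $p/2=2$), the task is to show: for every positive $L_2$-bounded martingale $(z_n)$ there exists $c\ge 0$ with $z_n\le c$ for all $n$ and $\|c\|_2\le 2\|z_\infty\|_2$. This is the non-commutative version of Doob's classical $L_2$-inequality for positive martingales; the standard route is via Cuculescu's projection construction to obtain a weak-type $(1,1)$ bound for positive martingales, combined with the trivial $L_\infty$ estimate, interpolated in a way that keeps track of the sharp constant $2$ at $q=2$. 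Once this is in hand, the $p=4$ case of the theorem follows with constant $\sqrt 2$.

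Finally, I would pass to general $p\in(4,\infty)$ by complex interpolation. The family $\{L_p(\ell_\infty^c)\}_{p\ge 2}$ forms a Pisier--Xu interpolation scale, and the linear map $T:x_\infty\mapsto(E_n(x_\infty))_n$ is bounded from $L_\infty(\CM)$ to $L_\infty(\ell_\infty^c)$ with norm $1$ and from $L_4(\CM)$ to $L_4(\ell_\infty^c)$ with norm $\sqrt 2$. Interpolating at $\theta=4/p$ yields
\[
\|T:L_p\to L_p(\ell_\infty^c)\|\le 1^{1-\theta}\cdot(\sqrt 2)^{\theta}=2^{2/p},
\]
which is the announced bound.

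The main obstacle is the endpoint $p=4$: producing the dominator $c$ with the sharp constant $\|c\|_2\le 2\|z_\infty\|_2$ is a genuinely non-commutative maximal-function issue because no pointwise supremum of $(z_n)$ exists in $\CM$. All the work is hidden there; once that is granted, the Kadison--Schwarz reduction and complex interpolation are essentially formal.
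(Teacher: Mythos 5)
The paper does not prove this theorem; it is quoted as an import from Junge (\emph{Doob's inequality for non-commutative martingales}, 2002), so there is no ``paper's own proof'' to compare against line by line. Evaluating your proposal on its own merits: the skeleton is the right one. The factorization description of $L_p(\ell_\infty^c)$, the Kadison--Schwarz reduction $x_n^*x_n \le E_n(|x_\infty|^2)$ to a positive $L_{p/2}$-martingale $(z_n)$, the trivial $p=\infty$ endpoint, and the passage to general $p\ge 4$ by complex interpolation of the Pisier--Junge--Xu scale are all sound, and the numerology $1^{1-\theta}(\sqrt2)^{\theta}=2^{2/p}$ with $\theta=4/p$ is exactly what is needed.

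The genuine gap is the base case $p=4$, i.e.\ the claim that every positive $L_2$-martingale $(z_n)=(E_n(z))$ admits a dominator $c\ge z_n$ with $\|c\|_2\le 2\|z\|_2$. Your proposed route --- Cuculescu's weak-$(1,1)$ projection estimate combined with the trivial $L_\infty$ bound, ``interpolated in a way that keeps track of the sharp constant $2$'' --- does not work in the non-commutative setting. First, Cuculescu produces a decreasing family of projections $q_\lambda$ with $q_\lambda z_n q_\lambda\le\lambda$ and $\tau(1-q_\lambda)\lesssim\lambda^{-1}\|z\|_1$; this is not a dominator $c$ and converting it into one is itself a nontrivial problem with loss of constants. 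Second, there is no non-commutative Marcinkiewicz interpolation for maximal functions: the classical derivation of the constant $q/(q-1)$ from weak-$(1,1)$ plus $L_\infty$ goes through the distribution function of $\sup_n z_n$, and here no such supremum exists --- the object to control is a whole sequence, which is exactly why the $L_p(\ell_\infty^c)$ formalism is needed in the first place. So the sentence you flag as ``all the work is hidden there'' is not merely unfilled detail; the route sketched for filling it is the wrong one. A correct way to close the $p=4$ case, staying within your framework, is to prove the \emph{dual} Doob inequality at $L_2$: for $a_n\ge0$ and $a=\sum_n a_n$, writing $A=\sum_n E_n(a_n)$ one has
\[
\tau(A^2)=2\sum_{n\le m}\tau\bigl(E_n(a_n)\,a_m\bigr)-\sum_n\tau\bigl(E_n(a_n)\,a_n\bigr)\le 2\sum_m\tau\bigl(B_m\,a_m\bigr)\le 2\,\tau(Aa)\le 2\|A\|_2\|a\|_2,
\]
where $B_m=\sum_{n\le m}E_n(a_n)\le A$, so $\|A\|_2\le 2\|a\|_2$; the duality between the positive cones of $L_2(\ell_1)$ and $L_2(\ell_\infty)$ then yields the dominator $c$ with $\|c\|_2\le 2\|z\|_2$, hence constant $\sqrt2$ at $p=4$, and your interpolation takes over from there.
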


Let us recall the following Golden-Thompson inequalities and prove an exponential inequality which is crucial for our main result.

\begin{lem}[Golden–Thompson Inequality]\cite{ruskai1972inequalities}\label{lem: GT ineq}
    Let $a$ and $b$ be self-adjoint operators that are bounded above, and assume $a + b$ is essentially self-adjoint (that is, its closure is self-adjoint). Then the following inequality holds:

    \begin{equation}
        \tau(e^{a + b}) \leq \tau(e^{a/2} e^b e^{a/2}).
    \end{equation}
    
    \noindent Moreover, if either $\tau(e^a) < \infty$ or $\tau(e^b) < \infty$, then:
    \begin{equation}\label{GT ineq}
        \tau(e^{a + b}) \leq \tau(e^a e^b).
    \end{equation}
\end{lem}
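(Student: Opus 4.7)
The plan is to follow the standard proof, combining the symmetric Lie--Trotter product formula with the Araki--Lieb--Thirring inequality, both valid in a finite tracial von Neumann algebra. Under the hypotheses --- $a, b$ self-adjoint and bounded above, with $a+b$ essentially self-adjoint --- each of $e^{a/n}$, $e^{b/n}$, $e^{a}$, $e^{b}$, and $e^{a+b}$ is a bounded positive operator in $\CM$ (the last because $a+b$ is itself bounded above by the sum of the individual upper bounds).

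First, by the Trotter--Kato product formula one has strong operator convergence
\begin{equation*}
e^{a+b} \;=\; \lim_{n\to\infty} Z_n, \qquad Z_n := \bigl(e^{a/2n}\, e^{b/n}\, e^{a/2n}\bigr)^n.
\end{equation*}
Since the $Z_n$ are uniformly bounded and $\tau$ is normal (hence continuous for the ultraweak topology, which coincides with the weak operator topology on bounded subsets of $\CM$), strong convergence forces $\tau(Z_n) \to \tau(e^{a+b})$. The Araki--Lieb--Thirring inequality in its tracial form reads: for positive $X, Y \in \CM$ and integer $n \geq 1$,
\begin{equation*}
\tau\bigl((X^{1/2} Y X^{1/2})^n\bigr) \;\leq\; \tau\bigl(X^{n/2} Y^n X^{n/2}\bigr).
\end{equation*}
Applied with $X = e^{a/n}$ and $Y = e^{b/n}$, it gives the $n$-uniform bound $\tau(Z_n) \leq \tau(e^{a/2} e^b e^{a/2})$. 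Combining these two statements proves the first inequality.

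For the second inequality, finiteness of $\tau(e^a)$ (the case $\tau(e^b) < \infty$ is symmetric) places $e^{a/2}$ in $L_2(\CM,\tau)$, so that $e^{b/2} e^{a/2}$ is Hilbert--Schmidt, and
\begin{equation*}
\tau(e^{a/2} e^b e^{a/2}) \;=\; \tau\bigl((e^{b/2} e^{a/2})^*(e^{b/2} e^{a/2})\bigr) \;=\; \tau(e^a e^b),
\end{equation*}
the last equality being cyclicity of $\tau$ on $L_1(\CM,\tau)$. This yields the second form.

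The chief technical hurdle is the Lie--Trotter convergence itself for operators that are only bounded above: the elementary bounded-operator argument is not enough, and one must appeal to the Trotter--Kato product-formula theorem for strongly continuous contraction semigroups, where the essential self-adjointness of $a+b$ enters crucially. The Araki--Lieb--Thirring inequality extends from the matrix case to a general finite tracial von Neumann algebra by Araki's original spectral-theoretic proof essentially verbatim, so no additional difficulty arises there.
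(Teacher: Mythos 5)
The paper does not prove this lemma at all --- it is stated as a cited result of Ruskai \cite{ruskai1972inequalities}, and her argument (via monotonicity properties of the relative entropy functional / an analytic approximation scheme, developed for semifinite traces) is not reproduced in the paper. So there is no in-paper proof to compare against; what you have supplied is a genuinely different and independently standard route, and it buys a more transparent argument at the cost of two nontrivial inputs that Ruskai's proof avoids, namely the Trotter--Kato product formula for unbounded generators and the Araki--Lieb--Thirring inequality in the tracial setting.

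Your sketch is essentially sound. Three small remarks. First, the passage ``strong convergence forces $\tau(Z_n)\to\tau(e^{a+b})$'' is correct but worth spelling out: the $Z_n$ are uniformly bounded by $e^{M_a+M_b}$, on bounded sets the strong and weak operator topologies agree with the $\sigma$-weak topology, and $\tau$ normal means $\sigma$-weakly continuous. Second, you need the \emph{symmetric} Trotter--Kato formula $(e^{a/2n}e^{b/n}e^{a/2n})^n\to e^{\overline{a+b}}$, not just the nonsymmetric one; this does hold under the stated hypotheses (self-adjoint, bounded above, essentially self-adjoint sum), but it requires a separate citation from the nonsymmetric Trotter theorem, and you should say which. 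Third, the chain
$\tau(e^{a/2}e^{b}e^{a/2})=\tau\bigl((e^{b/2}e^{a/2})^*(e^{b/2}e^{a/2})\bigr)=\tau(e^a e^b)$
is a little misleading as written: the first equality is a tautology, and the step you actually want is either $\tau(T^*T)=\tau(TT^*)$ followed by one more cyclicity, or, more directly, take $x=e^{a/2}\in L_2$, $y=e^b e^{a/2}\in L_2$ and use $\tau(xy)=\tau(yx)=\tau(e^b e^a)=\tau(e^a e^b)$. None of these points is a gap --- they are presentation issues --- so the proposal stands as a correct proof by a different method than the one the paper invokes.
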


\begin{lem}[Improved Golden–Thompson inequality]\cite{randrianantoanina2024triple}\label{lem: improved GT ineq}
    Let $\mathcal{M}$ be a finite von Neumann algebra equipped with a tracial state $\tau$ and $1 \leq p, q \leq \infty$ with $\frac{1}{p} + \frac{1}{q} = 1$. Assume that $a$, $b$, and $c$ are self-adjoint $\tau$-measurable operators satisfying: $a \in \mathcal{M}$, $e^b \in L_p(\mathcal{M}, \tau)$, and $e^c \in L_q(\mathcal{M}, \tau)$. The following inequality holds:
    $$
    \tau(e^{a + b + c}) \leq \int_0^\infty \tau\left( e^{c/2}(e^{-a} + t1)^{-1} e^b (e^{-a} + t1)^{-1} e^{c/2} \right) dt.
    $$
\end{lem}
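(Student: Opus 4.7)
The proof proceeds in three stages.

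\emph{Reduction to the self-adjoint case.} Pass to $M_2(\CM)$ equipped with the normalized trace and define
$$
\tilde x_n := \begin{pmatrix} 0 & x_n \\ x_n^* & 0 \end{pmatrix},
$$
which is a self-adjoint martingale with respect to $(M_2(\CM_n))_n$. Its differences $\tilde d_n$ satisfy $\|\tilde d_n\|_\infty = \|d_n\|_\infty$, and
$$
\sum_{i=1}^n E_{i-1}(\tilde d_i^{\,2}) = \begin{pmatrix} \sum_{i=1}^n E_{i-1}(d_i d_i^*) & 0 \\ 0 & \sum_{i=1}^n E_{i-1}(d_i^* d_i) \end{pmatrix}
$$
has operator norm exactly $s_n^2$. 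Since $\|\tilde x_n\|_\infty = \|x_n\|_\infty$, \Cref{rem: indep of ambient vna} reduces the theorem to the self-adjoint setting, where $s_n^2 = \bigl\|\sum_i E_{i-1}(d_i^2)\bigr\|_\infty$.

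\emph{Sharp exponential tail.} For a self-adjoint $x_n$, the exponential Chebyshev inequality gives $\tau(\mathbf{1}_{[\lambda,\infty)}(x_n)) \le e^{-t\lambda}\tau(e^{t x_n})$ for every $t>0$. Iterating \Cref{lem: improved GT ineq}, together with the martingale identity $E_{i-1}(d_i)=0$ and standard operator estimates on $e^{t d_i}$, yields, provided $t\|d_i\|_\infty$ stays small,
$$
\tau(e^{t x_n}) \le \exp\!\left(\tfrac{t^2 s_n^2}{2}(1+o(1))\right).
$$
The crucial feature is that the triple-operator form of Randrianantoanina's inequality produces the sharp variance constant $\tfrac12$ in the exponent rather than the $1$ that occurs in \cite{junge2015noncommutative}; this is precisely what promotes Zeng's $2$ to the optimal $\sqrt 2$. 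Optimizing at $t \approx u_n/(s_n\sqrt{2(1+\varepsilon)})$---admissible since $\alpha_n \to 0$ forces $t\|d_n\|_\infty \to 0$---and $\lambda = \sqrt{2(1+\varepsilon)}\, s_n u_n$ yields
$$
\tau\!\left(\mathbf{1}_{[\lambda,\infty)}(x_n)\right) \le L(s_n^2)^{-(1+\varepsilon/2)}.
$$

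\emph{Blocking and almost-uniform Borel--Cantelli.} Fix $\rho>1$ close to $1$. Using \Cref{prop: seq lemma} and $s_n^2\to\infty$, extract $(n_k)_k$ with $s_{n_k}^2 \in [\rho^k,\rho^{k+1})$. Apply the exponential tail bound above---in its maximal form, obtained via the submartingale property of $(e^{t x_n})_n$ together with a non-commutative Doob-type argument---to the block martingale $x_n - x_{n_k}$ on $[n_k, n_{k+1}]$, at the threshold $\Lambda_k := \sqrt{2(1+\varepsilon)}\, s_{n_{k+1}} u_{n_{k+1}}$. This produces spectral projections $e_k$ with $\tau(e_k) \lesssim k^{-(1+\varepsilon/2)}$, which is summable in $k$. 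Setting $p := \mathbf{1} - \bigvee_{k\ge k_0} e_k$ for large $k_0$ gives $\tau(\mathbf{1}-p)$ arbitrarily small, while on $p$ the block maxima are bounded by $\Lambda_k$. Since $s_n u_n \le \sqrt{\rho}\, s_{n_{k+1}} u_{n_{k+1}}$ for $n_k \le n \le n_{k+1}$, interpolation across blocks and then sending $\rho \downarrow 1$, $\varepsilon \downarrow 0$ yields the claimed a.u. bound.

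The main technical obstacle lies in the second stage: extracting the optimal coefficient $\tfrac12$ in the Gaussian exponent requires carefully iterating the triple-operator \Cref{lem: improved GT ineq} so as to absorb the non-commutativity between $e^{t x_{n-1}}$ and $e^{t d_n}$ under $E_{n-1}$ without incurring the spurious factor of $2$ that appears in the coarser Golden--Thompson argument used in \cite{junge2015noncommutative,zeng2015kolmogorov}. A secondary subtlety is packaging the per-block tail estimates into a single projection witnessing the a.u.---rather than merely b.a.u.---bound; this is where the summability of $k^{-(1+\varepsilon/2)}$ is used in conjunction with the lattice join $\bigvee_k e_k$.
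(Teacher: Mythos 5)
Your proposal does not address the statement at all. The statement to be proven is the improved Golden--Thompson trace inequality: for self-adjoint $\tau$-measurable $a,b,c$ with $a\in\mathcal{M}$, $e^b\in L_p$, $e^c\in L_q$,
\[
\tau(e^{a+b+c}) \leq \int_0^\infty \tau\bigl( e^{c/2}(e^{-a}+t1)^{-1} e^b (e^{-a}+t1)^{-1} e^{c/2} \bigr)\,dt.
\]
This is a purely operator-theoretic trace inequality about exponentials of sums of (fixed) self-adjoint operators; it involves no filtration, no conditional expectation, no martingale, and no limsup. The paper itself does not prove it --- it is imported verbatim from Randrianantoanina's work --- so a proof would have to reproduce or adapt his argument, which proceeds via a Lie--Trotter product formula for $e^{a+b+c}$ combined with an integral representation of the resolvent and complex-interpolation / Araki--Lieb--Thirring-type monotonicity of $t\mapsto\tau(|XYX|^t)$.

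What you have written instead is a sketch of the proof of the main LIL theorem (the $2\times 2$ self-adjointization in $M_2(\mathcal{M})$, the exponential tail bound of \Cref{thm: exp ineq}, the blocking and Borel--Cantelli argument of \Cref{thm: main thm}). That is a different, downstream result --- indeed, one whose proof \emph{uses} \Cref{lem: improved GT ineq} as a black box inside \Cref{thm: exp ineq}. As a proof of the Golden--Thompson lemma, the proposal contains no relevant step: there is no handling of the three-operator exponential, no integral over $t$, no appearance of the resolvent $(e^{-a}+t1)^{-1}$, and no control of the measurability/integrability hypotheses on $e^b$, $e^c$. You should restart from the actual statement.
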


The proof of the next result is  very close to \cite[Theorem 4.3]{randrianantoanina2024triple}. By introducing a subtle modification to their argument, we are able to derive a strengthened form of the corresponding inequality. This enhancement plays a central role in establishing our improved version of the law of the iterated logarithm, serving as a key technical component in the overall argument. In addition, the refined bound obtained here may be of independent interest for related problems in the non-commutative probability framework.

\begin{thm}\label{thm: exp ineq}
    Let $ (x_k)_{k=0}^n $ be a finite self-adjoint martingale sequence with respect to the filtration $ (\mathcal{M}_k, E_k) $, and let $ d_k := dx_k = x_k - x_{k-1} $ represent the associated martingale differences.  Let the following conditions hold:
\begin{itemize}
    \item[(i)] $x_0 = 0 $, 
    \item[(ii)] $ \|d_k\| \leq M $ for all $1 \leq k \leq n$,
   \item[(iii)] $ \sum_{k=1}^n E_{k-1}(d_k^2) \leq D^2 1 $.
\end{itemize}
 Then, the following hold:
\begin{enumerate}
    \item For all $\lambda>0$, 
    \[
    \tau\left( e^{\lambda x_n} \right) \leq \exp \left[ \frac{e^{\lambda M} - \lambda M -1}{M^2} D^2 \right]
    \]
    \item For any $ \epsilon \in (0, 1] $ and $ \lambda \in \left[ 0, \frac{3 \epsilon}{M} \right] $,
    \[
    \tau\left( e^{\lambda x_n} \right) \leq \exp\left[ \frac{1}{2} (1 + \epsilon) \lambda^2 D^2 \right].
    \]
\end{enumerate}

\end{thm}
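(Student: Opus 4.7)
The plan is to prove part (1) by a non-commutative exponential supermartingale argument, driven by the improved Golden--Thompson inequality (\Cref{lem: improved GT ineq}), and then derive part (2) from part (1) by a purely scalar series estimate. Setting $\psi := (e^{\lambda M} - \lambda M - 1)/M^2$ and $V_k := E_{k-1}(d_k^2)$, I introduce
\[
Y_n := \lambda x_n - \psi \sum_{k=1}^n V_k,
\]
the non-commutative analogue of the classical exponential supermartingale. The whole argument reduces to proving $\tau(e^{Y_n}) \leq \tau(e^{Y_{n-1}})$ for each $n \geq 1$, iterating down to $\tau(e^{Y_0}) = 1$.

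For the inductive step I would split $Y_n = a + b + c$ with $a = -\psi V_n \in \CM_{n-1}$, $b = Y_{n-1} \in \CM_{n-1}$, and $c = \lambda d_n \in \CM_n$, all bounded self-adjoint. Applying \Cref{lem: improved GT ineq}, using cyclicity of $\tau$ and the module property of $E_{n-1}$ (all factors other than $e^c = e^{c/2} e^{c/2}$ lie in $\CM_{n-1}$), together with the elementary scalar inequality $e^y \leq 1 + y + \psi\, y^2$ valid on $[-\lambda M,\lambda M]$ (which in functional calculus and after $E_{n-1}$ gives $E_{n-1}(e^{\lambda d_n}) \leq 1 + \psi V_n \leq e^{\psi V_n} = e^{-a}$), the integrand in \Cref{lem: improved GT ineq} is pointwise bounded by $\tau\bigl((e^{-a}+t)^{-1} e^b (e^{-a}+t)^{-1} e^{-a}\bigr)$. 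Since $(e^{-a}+t)^{-1} e^{-a} (e^{-a}+t)^{-1}$ is a function of $e^{-a}$ alone, a cyclic rearrangement of the trace together with the spectral identity $\int_0^\infty s/(s+t)^2\, dt = 1$ reduces the $t$-integral to $\tau(e^b)$; hence $\tau(e^{Y_n}) \leq \tau(e^{Y_{n-1}})$.

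To transfer the bound $\tau(e^{Y_n}) \leq 1$ to $\tau(e^{\lambda x_n})$, one application of the ordinary Golden--Thompson inequality (\Cref{lem: GT ineq}) gives
\[
\tau(e^{\lambda x_n}) = \tau\bigl(e^{Y_n + \psi\sum_{k=1}^n V_k}\bigr) \leq \tau\bigl(e^{Y_n}\, e^{\psi\sum_{k=1}^n V_k}\bigr) \leq \norm{e^{\psi\sum_{k=1}^n V_k}}_\infty \tau(e^{Y_n}) \leq e^{\psi D^2},
\]
where the last estimate follows from assumption (iii) via functional calculus on the positive operator $\sum V_k \leq D^2 \cdot 1$. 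Part (2) then follows from part (1) together with the scalar inequality $e^u - u - 1 \leq \tfrac{1+\epsilon}{2}\, u^2$ for $u = \lambda M$ in the prescribed range, obtained from a geometric tail bound on $e^u - u - 1 = \sum_{k \geq 2} u^k/k!$.

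The main obstacle is the non-commutativity between $Y_{n-1}$ and $V_n$: in the scalar case the supermartingale inequality is immediate from conditional expectations, but here one needs the three-operator improved Golden--Thompson inequality to secure the trace inequality in the correct direction and with the optimal constant. The ``subtle modification'' of \cite{randrianantoanina2024triple} alluded to in the paper is precisely the filtration-adapted choice of the triple $(a,b,c)$ above, which makes the integrand collapse exactly to $\tau(e^{Y_{n-1}})$ rather than to a larger expression.
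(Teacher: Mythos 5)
Your proof follows essentially the same route as the paper: the exponential supermartingale $Y_n = \lambda x_n - \psi\sum_{k\le n} E_{k-1}(d_k^2)$ with $\psi = F(\lambda M)/M^2$, the triple decomposition of $Y_n$ fed into the improved Golden--Thompson inequality (\Cref{lem: improved GT ineq}) together with trace-preservation and the module property of $E_{n-1}$, the pointwise operator bound $E_{n-1}(e^{\lambda d_n}) \le e^{\psi E_{n-1}(d_n^2)}$, the spectral identity $\int_0^\infty s/(s+t)^2\,dt = 1$, and finally the ordinary Golden--Thompson inequality plus the norm bound $\|\psi\sum_k E_{k-1}(d_k^2)\|\le\psi D^2$; your choice to split $e^{\lambda d_n}$ rather than $e^{Y_{n-1}}$ into the $e^{c/2}\cdots e^{c/2}$ factors is a harmless cosmetic variant. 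One bookkeeping slip: the scalar estimate you invoke should read $e^{\lambda s}\le 1+\lambda s+\psi s^2$ for $|s|\le M$ (equivalently $e^y\le 1+y+(\psi/\lambda^2)y^2$ for $|y|\le\lambda M$), since as literally written ``$e^y\le 1+y+\psi y^2$'' applied to $y=\lambda d_n$ would yield $E_{n-1}(e^{\lambda d_n})\le 1+\psi\lambda^2 V_n$ rather than the $1+\psi V_n$ you correctly use downstream.
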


\begin{proof}
\emph{(1):} For $s \in \R$, let us first define the function 
\[
F(s):= e^s-s-1.
\] 
Note that $F(s) \leq F(\abs{s})$.
Now for $n \geq 1$ and $\lambda>0$, let us define for $1\leq l\leq n$
\[
z_l:= \frac{F(\lambda M)}{M^2} \sum_{k=1}^l E_{k-1} (d_k^2).
\]
Then, for any $n \in \N$ and $\lambda>0$, by \Cref{lem: GT ineq}, we have 
\begin{align*}
    \tau(e^{\lambda x_n}) \leq \tau(e^{\lambda x_n - z_n} e^{z_n}).
\end{align*}
Since for all $n \in \N$, $0\le z_n \leq \frac{F(\lambda M)}{M^2} D^2\cdot 1$, we further obtain
\[
\tau(e^{\lambda x_n}) \leq \exp \left[\frac{F(\lambda M)}{M^2} D^2 \right] \tau(e^{\lambda x_n - z_n}).
\]
We claim that 
\begin{equation}\label{eq: claim}
    \tau\left(e^{\lambda x_l - z_l}\right) \leq 1, \text{ for every } n\geq l\ge 1.
\end{equation}

 Indeed, fix $l\ge 1$ and write,
\begin{equation}
    \lambda x_l-z_l = \alpha_l +\beta_l+\eta_l,
\end{equation}

where,
$$\alpha_l:=-\frac{F(\lambda M)}{M^2} E_{l-1}\left(d_l^2\right), ~\beta_l:= \lambda  d_l,~\text{ and } \eta_l:=\lambda x_{l-1}-z_{l-1}.$$
Notice that, $\alpha_l,\beta_l$ and $\eta_l$ are all bounded self-adjoint operators and $z_0=0$. Using \Cref{lem: improved GT ineq}, we have:
\begin{align*}
    \tau\left(\exp\left(\lambda x_l-z_l\right)\right)&= \tau\left(e^{\alpha_l+\beta_l+\eta_l}\right)\\
    &\le \int_0^{\infty} \tau\left(e^{\eta_l}(e^{-\alpha_l} + t \mathbf{1})^{-1} e^{\beta_l}(e^{-\alpha_l} + t \mathbf{1})^{-1}\right) dt.
\end{align*}

Given that both $\alpha_l$ and $\eta_l$ lie in $\CM_{l-1}$, the trace-preserving property of the conditional expectation $E_{l-1}$ allows us to deduce that
\begin{align}
    \tau\left(\exp\left(\lambda x_l-z_l\right)\right)\le \int_0^{\infty} \tau\left(e^{\eta_l} (e^{-\alpha_l} + t\mathbf{1})^{-1} E_{l-1}(e^{\beta_l})(e^{-\alpha_l} + t\mathbf{1})^{-1} \right) dt. \label{eq: tau invariance ineq}
\end{align}

Consider a real valued function $g$ on $\R_+$ defined by:
$$g(s):=\frac{F(s)}{s^2}.$$

Now recall that, for $s\in \R_+,$ we have $s\le e^{s-1}.$ Using functional calculus, it yields that:
\begin{align*}
    E_{l-1}(e^{\beta_l}) =E_{l-1}(e^{\lambda d_l}) &\leq \exp\left( E_{    l-1}\left(e^{\lambda d_l} - 1\right) \right)\stepcounter{equation}\tag{\theequation}\label{eq: expectation ineq}\\
    &= \exp\left( E_{l-1}\left(e^{\lambda d_l} - \lambda d_l - 1\right) \right)\\
    &\le\exp\left( E_{l-1}\left(F\left(\abs{\lambda d_l}\right)\right) \right) \\
    &= \exp\left( E_{l-1}\left(\lambda^2d_l^2 g(\lambda \abs{d_l}) \right) \right).
\end{align*}
Since the function $g(s)$ is strictly increasing on the interval $(0, \infty)$, we must have $g(\lambda \abs{d_l}) \leq g(\lambda M)$. Therefore, 
\begin{equation}\label{eq: function ineq}
     E_{l-1}\left(\lambda^2d_l^2g\left(\lambda \abs{d_l}\right)\right)\le \frac{F(\lambda M)}{M^2}E_{l-1}\left(d_l^2\right).
\end{equation}

Now combining \cref{eq: expectation ineq} and \cref{eq: function ineq}, we have:
\begin{equation}\label{eq: alpha_n ineq}
    E_{l-1}\left(e^{\beta_l}\right)\le \exp\left(\frac{F(\lambda M)}{M^2}E_{l-1}\left(d_l^2\right)\right)=\exp\left(-\alpha_l\right).
\end{equation}

By applying both \cref{eq: tau invariance ineq} and \cref{eq: alpha_n ineq}, we can deduce the following estimate:

$$
\tau\left( \exp(\lambda x_l - z_l) \right) \leq \int_0^{\infty} \tau\left( e^{\eta_l} (e^{-\alpha_l} + t\mathbf{1})^{-1} e^{-\alpha_l} (e^{-\alpha_l} + t\mathbf{1})^{-1} \right) dt.
$$

Now since for all $s>0$
$$
\int_0^\infty \frac{s}{(s + t)^2} \, dt = 1,
$$ 
using functional calculus, we obtain 
\[
\int_0^{\infty}   (e^{-\alpha_l} + t\mathbf{1})^{-1} e^{-\alpha_l} (e^{-\alpha_l} + t\mathbf{1})^{-1}  dt=1.
\]

Therefore, for all integers $l$ with  $n\geq l\geq 1$, we obtain
$$
\tau\left(\exp(\lambda x_l - z_l)\right) \leq \tau(e^{\eta_l})= \tau\left(\exp(\lambda x_{l-1} - z_{l-1})\right).
$$

Hence, by induction, we get
$$
\tau\left(\exp(\lambda x_n - z_n)\right) \leq \tau\left(\exp(\lambda x_0 - z_0)\right)=1.
$$
This proves the claim.

    \emph{(2):} Observe that, for every $\epsilon>0$ whenever $\lambda \in \left[ 0, \frac{3 \epsilon}{M} \right]$, we have    
        $$e^{\lambda M} - \lambda M -1 \leq (1+ \epsilon) \frac{\lambda^2 M^2}{2}.$$    
    Therefore, the result follows.
\end{proof}

An essential component of the arguments involves a standard extension of the classical Borel–Cantelli lemma. To describe this,  which introduces the column-wise tail probability for a sequence of random variables $(x_i)_{i \in I}$. It is given by:

$$
\operatorname{Prob}_c\left( \sup_{i \in I} \|x_i\| > t \right) 
= \inf \left\{ s > 0 : \exists \text{ projection } e \text{ with } \tau(1 - e) < s \text{ and } \|x_i e\|_\infty \leq t \ \forall i \in I \right\},
$$

for any threshold $t > 0$. This definition directly implies the following monotonicity property:

$$
\operatorname{Prob}_c\left( \sup_{i \in I} \|x_i\| > t \right) 
\leq 
\operatorname{Prob}_c\left( \sup_{i \in I} \|x_i\| > r \right),
$$

whenever $t \geq r$. Moreover, if each coefficient $a_i \geq 1$ for $i \in I$, then scaling the sequence yields:

\begin{equation}\label{scaling}
\operatorname{Prob}_c\left( \sup_{i \in I} \|x_i\| > t \right) 
\leq 
\operatorname{Prob}_c\left( \sup_{i \in I} \|a_i x_i\| > t \right).
\end{equation}

For more about the column-wise tail probability, we refer to \cite{konwerska2008law} and  \cite{zeng2015kolmogorov}.

\begin{lem}[non-commutative Borel--Cantelli lemma]\cite[Lemma 3.17]{konwerska2008law}\label{lem: Borel-Cantelli lemma}
    Let $(k_n)_{n \geq n_0}$ be an increasing sequence of integers and $(z_n)$ be a sequence of random variables. Then 
    $$
    \forall \delta>0, \quad \sum_{n \geq n_0} \operatorname{Prob}_c\left( \sup_{k_n\leq m< k_{n+1}} \|z_m\| > \gamma + \delta \right) < \infty
\quad \Longrightarrow\quad \limsup_{n \to \infty} z_n \underset{a.u.}{\leq } \gamma.
    $$
\end{lem}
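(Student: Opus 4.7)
The strategy is to use the definition of $\operatorname{Prob}_c$ to produce, for each block $[k_n,k_{n+1})$, a single projection $e_n$ controlling $\|z_m e_n\|_\infty$ uniformly over that block, and then to intersect these projections to obtain one projection $p$ that works for the whole tail.

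Concretely, I would fix $\epsilon,\delta>0$ and use summability of the given series to pick $N\geq n_0$ such that
\[
\sum_{n\geq N}\operatorname{Prob}_c\Bigl(\sup_{k_n\leq m<k_{n+1}}\|z_m\|>\gamma+\delta\Bigr)<\frac{\epsilon}{2}.
\]
For each $n\geq N$, the definition of $\operatorname{Prob}_c$ as an infimum lets me pick a projection $e_n\in\CP(\CM)$ satisfying
\[
\tau(1-e_n)\leq\operatorname{Prob}_c\Bigl(\sup_{k_n\leq m<k_{n+1}}\|z_m\|>\gamma+\delta\Bigr)+\frac{\epsilon}{2^{n+1}}
\]
together with $\|z_m e_n\|_\infty\leq\gamma+\delta$ for every $k_n\leq m<k_{n+1}$. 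The geometric slack $\epsilon/2^{n+1}$ is there purely so the cumulative defect stays summable.

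Set $p:=\bigwedge_{n\geq N}e_n\in\CP(\CM)$. Using $1-\bigwedge_n e_n=\bigvee_n(1-e_n)$ together with countable subadditivity of $\tau$ on suprema of projections (which in a finite tracial von Neumann algebra follows from $\tau(p\vee q)+\tau(p\wedge q)=\tau(p)+\tau(q)$ applied iteratively and normality of $\tau$), I get $\tau(1-p)\leq\sum_{n\geq N}\tau(1-e_n)<\epsilon$. On the other hand, for any $m\geq k_N$ there is a unique $n\geq N$ with $k_n\leq m<k_{n+1}$, and since $p\leq e_n$,
\[
\|z_m p\|_\infty=\|z_m e_n p\|_\infty\leq\|z_m e_n\|_\infty\leq\gamma+\delta.
\]
Therefore $\limsup_{m\to\infty}\|z_m p\|_\infty\leq\gamma+\delta$, which is exactly the a.u. upper bound demanded by Definition~\ref{defn: auleq}. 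The argument is essentially a bookkeeping exercise in the definition of $\operatorname{Prob}_c$; the only point that requires any care is making the approximation slack in each $e_n$ summable, and no deeper obstacle arises.
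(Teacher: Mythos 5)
Your argument is correct and is the standard Borel--Cantelli bookkeeping: extract a near-optimal projection $e_n$ for each block from the definition of $\operatorname{Prob}_c$, take the infimum projection, bound $\tau(1-p)$ by countable subadditivity (Kaplansky's parallelogram law plus normality), and observe that $p\leq e_n$ forces $\|z_m p\|\leq\|z_m e_n\|$ for $m$ in the $n$-th block. The paper itself gives no proof but cites Konwerska, and this is essentially the intended argument; the only micro-nitpick is that $\sum_{n\geq N}\epsilon/2^{n+1}=\epsilon/2^{N}$, which is $\leq\epsilon/2$ only once $N\geq 1$, so one should either note that $N$ may be taken $\geq 1$ or use slack $\epsilon/2^{n+2}$.
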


\begin{lem}[non-commutative Chebyshev inequality]\cite[Lemma 3.16]{konwerska2008law}\label{lem: Chebyshev inequality lemma}
    Let $(x_i)_{i \in I}$ be a sequence of random variables. For $t > 0$ and $1 \leq p < \infty$,
    $$
    \operatorname{Prob}_c\left( \sup_n \|x_n\| > t \right) \leq t^{-p} \|x\|_{L_p(\ell_\infty^c)}^p.
    $$
\end{lem}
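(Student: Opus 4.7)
The plan is to estimate the column-tail probability directly from the defining factorization of the $L_p(\ell_\infty^c)$-norm, using a tracial Chebyshev bound applied to a spectral projection of the positive ``column'' operator.

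First, I would fix $\eps > 0$ and choose a factorization $x_n = y_n a$ with $(y_n) \subset \CM$, $a \in L_p(\CM, \tau)$, and
$$
M \|a\|_p \;<\; \|x\|_{L_p(\ell_\infty^c)} + \eps, \qquad \text{where } M := \sup_n \|y_n\|.
$$
(We may assume $M > 0$, else $x \equiv 0$ and the inequality is trivial.) Set $e := 1_{[0,\, t/M]}(|a|) \in \CP(\CM)$. Since $e$ commutes with $|a|$, functional calculus gives $\||a|e\|_\infty \leq t/M$, and the polar decomposition $a = u|a|$ yields
$$
\|a e\|_\infty^2 \;=\; \|e a^* a e\|_\infty \;=\; \||a| e\|_\infty^2 \;\leq\; (t/M)^2.
$$
Consequently $\|x_n e\|_\infty \leq \|y_n\| \cdot \|a e\|_\infty \leq M \cdot (t/M) = t$ for every $n \in I$, so $e$ is admissible for the tail probability at level $t$.

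Next, I would bound $\tau(1-e)$. From the operator inequality $|a|^p \geq (t/M)^p\, 1_{(t/M,\infty)}(|a|)$ in $\widetilde{\CM}$ and positivity of $\tau$, the standard tracial Chebyshev estimate gives
$$
\tau(1 - e) \;=\; \tau\bigl(1_{(t/M,\infty)}(|a|)\bigr) \;\leq\; (M/t)^p \,\tau(|a|^p) \;=\; t^{-p} M^p \|a\|_p^p \;<\; t^{-p}\bigl(\|x\|_{L_p(\ell_\infty^c)} + \eps\bigr)^p.
$$
Thus, for every $s > t^{-p} \|x\|_{L_p(\ell_\infty^c)}^p$, by choosing $\eps$ small enough we exhibit a projection $e$ with $\tau(1-e) < s$ and $\|x_n e\|_\infty \leq t$ for all $n \in I$. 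By the defining infimum of $\operatorname{Prob}_c$, this yields the claimed bound.

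I do not anticipate a genuine obstacle. The only non-trivial point is the identity $\|a e\|_\infty = \||a| e\|_\infty$ when $e$ is a spectral projection of $|a|$, which follows immediately from functional calculus. The whole argument is the non-commutative mirror of the classical one-line Chebyshev proof, with the column $a$ serving as the scalar ``random variable'' and $e$ replacing the good event $\{|a| \leq t/M\}$.
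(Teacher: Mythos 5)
Your proof is correct and is the natural one: factor the sequence as $x_n = y_na$ near-optimally, take the spectral projection $e$ of $|a|$ at level $t/M$, use $\|ae\|_\infty = \||a|e\|_\infty$ to get admissibility, and apply the tracial Chebyshev estimate to bound $\tau(1-e)$. The paper itself does not prove this lemma — it is imported verbatim from Konwerska's thesis \cite[Lemma 3.16]{konwerska2008law} — so there is no in-paper argument to compare against, but your argument is the standard one and every step (the factorization, the identity for $\|ae\|_\infty$, the spectral Chebyshev bound, and the final $\eps\to 0$ limit in the infimum defining $\operatorname{Prob}_c$) is sound.
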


\section{LIL for Martingales}\label{sec: main sec}

This section is devoted to the proof of a non-commutative martingale analogue of the law of iterated logarithm. We begin with the precise description. 

Let $(x_n)_{n \in \N}$ be a sequence of martingales with respect to a filtration $(\CM_n)_{n \in \N}$. Furthermore, recall that for all $n \in \N$, let ${\color{blue}d_n:=} dx_n:= x_n - x_{n-1}$ denote the martingale differences. Then we will first prove the following theorem.

\begin{thm}\label{thm: main thm}
    Let $0=x_0, x_1, x_2, \ldots$ be a sequence of self-adjoint martingales in the von Neumann algebra $(\mathcal{M}, \tau)$. Let $s_n^2=\norm{\sum_{k=1}^n E_{k-1}(d_k^2)}$ and $u_n=L(s_n^2)^{1/2}$. Suppose that $s_n^2 \to \infty$ and $\norm{d_n} \leq \frac{\alpha_n s_n}{u_n}$, where $(\alpha_n)$ is a sequence of positive {\color{blue} real} numbers such that $\alpha_n \to 0$, then 
    \[
    \limsup_{n \to \infty} \frac{x_n}{s_n u_n} \underset{a.u.}{\leq }\sqrt{2}.
    \]
\end{thm}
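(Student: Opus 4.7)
Fix $\delta > 0$ and choose $\rho > 1$, $\varepsilon \in (0,1)$ so that $(\sqrt{2}+\delta)^2 > 2(1+\varepsilon)\rho$. Using \Cref{prop: seq lemma} I would build a blocking subsequence $(k_n)$ along which $s_{k_n}^2$ is of order $\rho^n$, and set $I_n := [k_n,k_{n+1})$. Since $(s_m u_m)_m$ is non-decreasing, if $\norm{x_m}/(s_m u_m) > \sqrt{2}+\delta$ for some $m \in I_n$ then $\norm{x_m} > (\sqrt{2}+\delta)\, s_{k_n} u_{k_n}$ for that same $m$. Hence, by the non-commutative Borel--Cantelli lemma (\Cref{lem: Borel-Cantelli lemma}), it suffices to show
$$
\sum_n \operatorname{Prob}_c\!\Big(\sup_{m\in I_n}\norm{x_m} > (\sqrt{2}+\delta)s_{k_n}u_{k_n}\Big) < \infty,
$$
after which the a.u.\ bound follows.

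\textbf{Sub-Gaussian moment estimate.} For each $n$, apply the improved exponential inequality \Cref{thm: exp ineq}(2) to the martingale $(x_m)_{0 \le m \le k_{n+1}-1}$: with $M_n := \sup_{m < k_{n+1}}\norm{d_m}$ and $D_n^2 := s_{k_{n+1}-1}^2 \le \rho^{n+1}$,
$$
\tau(e^{\lambda x_{k_{n+1}-1}}) \le \exp\!\big(\tfrac{1}{2}(1+\varepsilon)\lambda^2 D_n^2\big), \qquad \lambda \in [0, 3\varepsilon/M_n].
$$
Self-adjointness gives the matching bound for $-x_{k_{n+1}-1}$ and hence a two-sided Gaussian trace tail on $|x_{k_{n+1}-1}|$; integrating yields $\norm{x_{k_{n+1}-1}}_p \le C\sqrt{p(1+\varepsilon)\rho^{n+1}}$ on the corresponding moment range. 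Noting that $\norm{x_m}_p$ is non-decreasing in $m$ for a self-adjoint martingale (non-commutative Jensen), Doob's inequality \Cref{doob} combined with Chebyshev \Cref{lem: Chebyshev inequality lemma} then produces
$$
\operatorname{Prob}_c\!\Big(\sup_{m<k_{n+1}}\norm{x_m} > t\Big) \le 4\, t^{-p}\norm{x_{k_{n+1}-1}}_p^p.
$$

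\textbf{Optimization and summability.} Set $t_n := (\sqrt{2}+\delta)\, s_{k_n} u_{k_n}$ and choose $p_n$ of order $t_n^2/((1+\varepsilon)\rho^{n+1})$, which is precisely the Chernoff-type balance converting the sub-Gaussian moments into a Gaussian tail. The bound collapses to
$$
\operatorname{Prob}_c(\,\cdot\,) \le C\exp\!\Big(-\frac{t_n^2}{2(1+\varepsilon)\rho^{n+1}}\Big) \le C\exp\!\Big(-\frac{(\sqrt{2}+\delta)^2 u_{k_n}^2}{2(1+\varepsilon)\rho}\Big).
$$
Since $u_{k_n}^2 \ge (1-o(1))\ln n$, this decays as $n^{-(1+\eta)}$ for some $\eta>0$ by the choice of $\rho,\varepsilon$, giving the required summability. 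The hypothesis $\alpha_n \to 0$ ensures that for large $n$ the optimal Legendre parameter $\lambda_n = p_n/t_n$ satisfies $\lambda_n M_n \lesssim \alpha_n(\sqrt{2}+\delta) = o(\varepsilon)$ and therefore lies inside the admissible interval $[0, 3\varepsilon/M_n]$ of \Cref{thm: exp ineq}(2).

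\textbf{Main obstacle.} The essential difficulty, specific to the non-commutative setting, is that $(e^{\lambda x_m})_m$ is not a genuine submartingale, so an exponential maximal inequality is unavailable and the supremum estimate must be routed through the $L_p(\ell_\infty^c)$ norm via Doob and Chebyshev. This detour typically costs a constant factor in the exponent; the sharp prefactor $(1+\varepsilon)/2$ in \Cref{thm: exp ineq}, inherited from Randrianantoanina's triple Golden--Thompson inequality, is precisely what allows the joint optimization in $p$ and $\lambda$ to preserve the classical constant $\sqrt{2}$ rather than the weaker $2$ of \cite{zeng2015kolmogorov}. Verifying that this joint optimization is carried out without any loss --- in particular that the admissible range of \Cref{thm: exp ineq}(2) still contains the optimal $\lambda_n$ --- is the delicate technical point.
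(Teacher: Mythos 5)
Your approach is the paper's approach: Borel--Cantelli with a geometric blocking $s_{k_n}^2\sim\rho^n$, Doob's $L_p(\ell_\infty^c)$ maximal inequality plus Chebyshev, the improved Golden--Thompson exponential inequality of \Cref{thm: exp ineq}, and a Chernoff-type joint optimization in $(p,\lambda)$ whose balance yields the factor $(\sqrt2+\delta)^2/(2(1+\varepsilon))$ in the exponent, as in the paper. The reduction $\operatorname{Prob}_c(\sup_{m\in I_n}\|x_m/(s_m u_m)\|>\cdot)\le\operatorname{Prob}_c(\sup_{m\in I_n}\|x_m\|/(s_{k_n}u_{k_n})>\cdot)$ via monotonicity of $s_mu_m$ and \eqref{scaling} is correct, and your obstacle paragraph correctly identifies why the $L_p$ detour is needed and why Randrianantoanina's inequality restores the constant $\sqrt2$.

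Two points, however, are garbled or elided. First, \Cref{prop: seq lemma} has nothing to do with constructing the blocking $(k_n)$ --- the blocking is defined from $s_n^2$ alone, $k_n=\inf\{j:s_{j+1}^2\ge\eta^{2n}\}$. Its actual role is to replace $(\alpha_n)$ by a decreasing $(\alpha_n')$ with $\alpha_n's_n/u_n$ non-decreasing; this is precisely what lets you bound $M_n=\sup_{m< k_{n+1}}\|d_m\|\le \alpha'_{k_{n+1}-1}s_{k_{n+1}-1}/u_{k_{n+1}-1}$, which your final sentence needs but does not justify: from $\|d_m\|\le\alpha_ms_m/u_m$ alone one cannot conclude that the sup over $m<k_{n+1}$ is governed by the index $k_{n+1}$, since $\alpha_ms_m/u_m$ need not be monotone. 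This was exactly the gap in Zeng's original proof that the paper's \Cref{prop: seq lemma} exists to fill, so the verification that $\lambda_n\in[0,3\varepsilon/M_n]$ depends on it in an essential way. Second, you anchor the tail estimate at the \emph{lower} endpoint $s_{k_n}u_{k_n}$, so you need the two-sided bound $s_{k_n}^2\gtrsim\rho^n$; this does not follow from the definition of $k_n$ (which only gives $s_{k_n}^2<\rho^n$) but requires the small-increment estimate $s_{k_n+1}^2-s_{k_n}^2\le\|d_{k_n+1}\|^2=o(s_{k_n+1}^2)$. The paper sidesteps this by comparing $s_mu_m$ on the interval $(k_n,k_{n+1}]$ to the \emph{upper} endpoint $s_{k_{n+1}}u_{k_{n+1}}$, using $s_{k_n+1}^2\ge\eta^{2n}$ which is immediate from the definition; if you keep the lower-endpoint normalization you should supply the extra increment argument.
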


In \cite{zeng2015kolmogorov}, the author proved this theorem with the bound of the limit 
supremum equal to $2$. To establish this result, the martingale inequality 
\cite[Lemma~3]{zeng2015kolmogorov} was employed. However, the application of the aforesaid 
lemma in the proof of \cite[Theorem~1]{zeng2015kolmogorov} is  not totally correct, since for the chosen 
real sequence $(\alpha_n)$, the constant 
$M = \alpha_{k_{n+1}} \frac{s_{k_{n+1}}}{u_{k_{n+1}}}
$
does not necessarily bound $\|d_k\|$ for all $1 \leq k \leq n$. Here, we rectify this inaccuracy by constructing another real sequence $(\alpha_n')$, 
associated with the given sequence $(\alpha_n)$ in the following proposition, which satisfies 
the aforementioned property. In the process, we also improve \cite[Lemma~3]{zeng2015kolmogorov} 
(cf.~\Cref{thm: exp ineq}), and by using this improved version, we strengthen the bound to 
$\sqrt{2}$, which is consistent with the classical result of \cite{stout1970hartman}.

\begin{prop}\label{prop: seq lemma}
    Let $(\alpha_n)$ and $(\beta_n)$ be sequences of positive real numbers such that $\alpha_n \to 0$ and $\beta_n \to \infty$, with $(\beta_n)$ being monotonically increasing. Then there exists a sequence $(\alpha_n')$ of positive real numbers satisfying the following properties:
\begin{enumerate}
    \item $\alpha_n' \geq \alpha_n$ for all $n \in \mathbb{N}$,
    \item $\alpha_n' \downarrow 0$, that is, $(\alpha_n')$ is decreasing and converges to zero,
    \item the sequence $(\alpha_n' \beta_n)$ is non-decreasing; that is, $\alpha_n' \beta_n \leq \alpha_{n+1}' \beta_{n+1}$ for all $n \in \mathbb{N}$.
\end{enumerate}
\end{prop}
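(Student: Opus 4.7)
The construction will proceed in two stages. First, to guarantee the decreasing property in (2), I replace $(\alpha_n)$ by its non-increasing majorant
\[
\tilde{\alpha}_n := \sup_{k \geq n} \alpha_k,
\]
which is finite since $\alpha_k \to 0$, dominates $(\alpha_n)$, is non-increasing, and tends to zero. Second, to accommodate (3), I set
\[
\alpha_n' := \frac{1}{\beta_n} \max_{1 \leq k \leq n} \tilde{\alpha}_k \beta_k.
\]

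Properties (1) and (3) are then immediate: taking $k = n$ in the maximum gives $\alpha_n' \geq \tilde{\alpha}_n \geq \alpha_n$, and $\alpha_n' \beta_n = \max_{k \leq n} \tilde{\alpha}_k \beta_k$ is manifestly non-decreasing in $n$. The real work lies in verifying (2). For monotonicity, I would split into cases according to whether the running maximum strictly increases at step $n+1$: if it does not, then $\alpha_{n+1}' \beta_{n+1} = \alpha_n' \beta_n$ together with $\beta_{n+1} \geq \beta_n$ gives $\alpha_{n+1}' \leq \alpha_n'$; if it does, then $\alpha_{n+1}' = \tilde{\alpha}_{n+1} \leq \tilde{\alpha}_n \leq \alpha_n'$ by monotonicity of $\tilde{\alpha}$.

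For $\alpha_n' \to 0$, I would fix $\epsilon > 0$, pick $N$ with $\tilde{\alpha}_k < \epsilon$ for all $k \geq N$, and decompose the running maximum into the piece over $k \leq N$, a fixed constant $C := \max_{k \leq N} \tilde{\alpha}_k \beta_k$, and the piece over $N < k \leq n$, which is at most $\epsilon \beta_n$. For $n$ large enough that $C \leq \epsilon \beta_n$, this gives $\alpha_n' \leq 2\epsilon$, so $\alpha_n' \to 0$. The main obstacle is the interplay between (2) and (3): either alone admits a one-line definition (a non-increasing majorant for (2), a running maximum of $\alpha_k \beta_k$ for (3)), but reconciling the non-increasing requirement on $(\alpha_n')$ with the non-decreasing requirement on $(\alpha_n' \beta_n)$ is exactly what forces the two-stage construction — first smooth $\alpha$ to be monotone, then take a running maximum after multiplying by $\beta$.
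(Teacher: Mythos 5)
Your proof is correct, and your construction is in fact the same as the paper's, just written in closed form rather than recursively. The paper also first passes to $\tilde{\alpha}_n := \sup_{j \geq n} \alpha_j$, then writes $\beta_n = \prod_{k \leq n}\gamma_k$ with $\gamma_k \geq 1$ and defines $\alpha_1' = \tilde\alpha_1$, $\alpha_{n+1}' = \max\{\tilde\alpha_{n+1}, \alpha_n'/\gamma_{n+1}\}$; multiplying through by $\beta_{n+1}$ and unrolling the recursion shows $\alpha_n'\beta_n = \max_{k\leq n}\tilde\alpha_k\beta_k$, i.e.\ exactly your formula $\alpha_n' = \beta_n^{-1}\max_{k\leq n}\tilde\alpha_k\beta_k$. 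Properties (1) and (3) and the monotonicity half of (2) then follow by essentially the same case-split in both arguments. Where you genuinely diverge is the proof that $\alpha_n' \to 0$: the paper extracts a subsequence $(n_k)$ along which $\prod_{i=n_k+1}^{n_{k+1}}\gamma_i \to \infty$ and argues by cases on whether the recursion ever hits the first branch between consecutive $n_k$'s, whereas you simply split the running maximum at a fixed index $N$ chosen so that $\tilde\alpha_k < \epsilon$ for $k \geq N$, bound the tail by $\epsilon\beta_n$, and absorb the finite head $C = \max_{k\leq N}\tilde\alpha_k\beta_k$ using $\beta_n \to \infty$. Your version of that step is shorter and avoids the auxiliary subsequence entirely, so it is the cleaner of the two.
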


\begin{proof}
    By replacing the original sequence $(\alpha_n)$ with the sequence $\tilde{\alpha}_n := \sup_{j \geq n} \alpha_j$, we may assume without loss of generality that $(\alpha_n)$ is decreasing and tends to zero. Since $(\beta_n)$ is an increasing sequence diverging to infinity as $n \to \infty$, we can express it in the form  
    $  \beta_n = \prod_{k=1}^n \gamma_k,
    $  
    for some sequence $(\gamma_k)$ of real numbers satisfying $\gamma_k \geq 1$ for all $k$. Moreover, the fact that  
    $\lim_{n \to \infty} \beta_n = \infty
    $  
    guarantees the existence of an increasing sequence $(n_k)$ such that 
    $   \lim_{k \to \infty} \prod_{i=n_k+1}^{n_{k+1}} \gamma_i = \infty$.

   We define the sequence $(\alpha_n')$ inductively in the following way:  
    \[
    \alpha_1':= \alpha_1;~ \alpha_{n+1}'= \max \left\{\alpha_{n+1}, \frac{\alpha_{n}'}{\gamma_{n+1}}\right\}~ \text{ for } n \geq 2.
    \]
    
    First observe that $\alpha_n \leq \alpha_n'$ for all $n \in \N$. Since for each $n \in \N$, we have $\alpha_{n+1} \leq \alpha_n \leq \alpha_n'$ and $\frac{\alpha_{n}'}{\gamma_{n+1}} \leq \alpha_n'$, we have $\alpha_{n+1}' \leq \alpha_n'$. Moreover, for each $n \in \N$
    \[
    \alpha_{n+1}' \gamma_{n+1}= \max \left\{ \alpha_{n+1} \gamma_{n+1}, \alpha_n' \right\} \geq \alpha_n'.
    \]
    Hence, $(3)$ holds true.
    Let $k \in \N$ be fixed. Consider the numbers $\{ \alpha'_{n_k}, \ldots, \alpha'_{n_{k+1}}  \}$. Suppose, there exists $n_k \leq j \leq n_{k+1}$ such that $\alpha_j'= \alpha_{j}$. In this case, since both $(\alpha_n)$ and $(\alpha_n')$ are decreasing sequences, we obtain $\alpha_{n_{k+1}}' \leq \alpha_j \leq \alpha_{n_k}$. Otherwise, if for all $n_k \leq j \leq n_{k+1}$, $\alpha_j'= \frac{\alpha_{j-1}'}{\gamma_j}$, then 
    \[
    \alpha_{n_{k+1}}'= \frac{\alpha_{n_k}'}{\prod_{i=n_k+1}^{n_{k+1}} \gamma_i} \leq \frac{\alpha_1'}{\prod_{i=n_k+1}^{n_{k+1}} \gamma_i}.
    \]

    Therefore, we proved that 
    \[
    \alpha_{n_{k+1}}' \leq \max \left\{ \alpha_{n_k}, \frac{\alpha_1'}{\prod_{i=n_k+1}^{n_{k+1}} \gamma_i} \right\}~ \text{for all } k \in \N.
    \]

    This implies that $\lim_{k \to \infty} \alpha_{n_{k+1}}'=0$, and since $(\alpha_n')$ is a decreasing sequence, we have $\lim_{n \to \infty} \alpha_n'=0$.
\end{proof}

Now we are ready to prove the main result of this section. The proof basically follows that of Zeng \cite{zeng2015kolmogorov} optimizing  the constant thanks to Theorem \ref{thm: exp ineq} and correcting the inaccuracy with Proposition \ref{prop: seq lemma}. 
\begin{proof}
\emph{of \Cref{thm: main thm}:}
  By the non-commutative Borel–Cantelli lemma, we aim to show that for any $\delta'>0$, 
 $$\sum_n \operatorname{Prob}_c\left( \sup_{k_n < m \le k_{n+1}} \left\| \frac{x_m}{s_m u_m} \right\| > \sqrt 2 (1+ \delta') \right)<\infty.$$
  
  We can find constants $\delta , \epsilon' > 0$,  $\eta \in (1,2)$ and $\epsilon\in (0,1)$ such that
\begin{equation}\label{epscond}
  1 + \delta' > \eta(1 + \delta)(1 - \epsilon')^{-1}, \qquad (1 + \delta)^2 / (1 + \epsilon) > 1. 
\end{equation}

    Set $k_0 = 0$, and for each $n \geq 1$, we define:
    $$
    k_n = \inf \left\{ j \in \mathbb{N} : s_{j+1}^2 \ge \eta^{2n} \right\}.
    $$
    Then $s^2_{k_n+1} \ge \eta^{2n}$ and $s^2_{k_n} < \eta^{2n}$. Now notice that there exists $N_1(\epsilon') > 0$ such that for $n > N_1(\epsilon')$,
    $$
    \frac{s^2_{k_n+1}u^2_{k_n+1}}{s^2_{k_{n+1}}u^2_{k_{n+1}}} \ge \eta^{-2} \frac{\ln \ln \eta^{2n}}{\ln \ln \eta^{2(n+1)}} \ge \left(1 - \epsilon'\right)^2 \eta^{-2}.
    $$
    Therefore, $s_m u_m \ge (1 - \epsilon')\eta^{-1}s_{k_{n+1}}u_{k_{n+1}}$ for $k_n < m \le k_{n+1}$. Furthermore, for all $n > N_1(\epsilon')$ and $\lambda>0$, we have using \eqref{scaling}
    \begin{equation}\label{eq: x_m and x_n ineq}
        \operatorname{Prob}_c\left( \sup_{k_n < m \le k_{n+1}} \left\| \frac{x_m}{s_m u_m} \right\| > \sqrt2(1 + \delta') \right)
    \le \operatorname{Prob}_c\left( \sup_{k_n < m \le k_{n+1}} \left\| \frac{\lambda x_m}{s_{k_{n+1}} u_{k_{n+1}}} \right\| > \lambda \sqrt 2(1 + \delta) \right).
    \end{equation}
    
    Notice that for $p \ge 4$, by Lemma \ref{lem: Chebyshev inequality lemma} and Theorem \ref{doob}
    \begin{align*}
    \operatorname{Prob}_c\left( \sup_{k_n < m \le k_{n+1}} \left\| \frac{\lambda x_m}{s_{k_{n+1}} u_{k_{n+1}}} \right\| > \lambda \sqrt2(1 + \delta) \right)
    &\le \left( \lambda \sqrt2(1 + \delta) \right)^{-p} \left\| \left( \frac{\lambda x_m}{s_{k_{n+1}} u_{k_{n+1}}} \right)_{k_n < m \le k_{n+1}} \right\|^p_{L_p(\ell_\infty^c)}\\
    &\le \left( \lambda \sqrt2(1 + \delta) \right)^{-p} \cdot \left(2^\frac{2}{p}\right)^p \left\| \frac{\lambda x_{k_{n+1}}}{s_{k_{n+1}} u_{k_{n+1}}} \right\|^p_p.
    \end{align*}
    
    Then using functional calculus for the inequality $|u|^p \le p^p e^{-p} (e^u + e^{-u})$, $u \in \R$, we find $$
    \left\| \frac{\lambda x_{k_{n+1}}}{s_{k_{n+1}} u_{k_{n+1}}} \right\|^p_p
    \le p^p e^{-p} \tau\left( \exp\left( \frac{\lambda x_{k_{n+1}}}{s_{k_{n+1}} u_{k_{n+1}}} \right) + \exp\left( -\frac{\lambda x_{k_{n+1}}}{s_{k_{n+1}} u_{k_{n+1}}} \right) \right).
    $$
Next we use \Cref{thm: exp ineq} to the martingale $\left( \frac
{x_ m}{s_{k_{n+1}} u_{k_{n+1}}}\right)_{m=0}^{k_{n+1}}$ to get an upper bound. To check assumption ii), consider the increasing sequence  $\left( \frac{s_n}{u_n} \right).$ Since the sequence $(\alpha_n)$ converges to $0$, by applying \Cref{prop: seq lemma}, we can construct a decreasing sequence $(\alpha_n')$ such that  $\alpha_n' \to 0$
    and the sequence  
    $\left( \frac{\alpha_n' s_n}{u_n} \right)_n$ 
    is increasing. We have
    $\|d_l\|\leq \alpha_l'\frac {s_l}{u_l}\leq \alpha_{k_{n+1}}'\frac {s_{k_{n+1}}}{u_{k_{n+1}}}$ for $1\leq l\leq k_{n+1}$.
    Thus assumption ii) holds with $M=\frac{\alpha_{k_{n+1}}'}{u_{k_{n+1}}^2}$ and for iii) we take $D^2 = 1/u^2_{k_{n+1}}$. Thus for  $n \ge 2$ and $0 \le \lambda \le \frac{3\epsilon u_{k_{n+1}}^2}{\alpha'_{k_{n+1}}} $, we find
    \begin{equation}\label{alpha1}
    \left\| \frac{\lambda x_{k_{n+1}}}{s_{k_{n+1}} u_{k_{n+1}}} \right\|^p_p
    \le 2 \left( \frac{p}{e} \right)^p \exp\left( \frac{(1 + \epsilon)\lambda^2}{2u^2_{k_{n+1}}} \right).\end{equation}
    Hence, for all $n > N_1(\epsilon') $ and $0 \le \lambda \le \frac{3\epsilon u_{k_{n+1}}^2}{\alpha'_{k_{n+1}}} $
    \begin{equation}\label{eq: Probc ineq}
        \operatorname{Prob}_c\left( \sup_{k_n < m \le k_{n+1}} \left\| \frac{\lambda x_m}{s_{k_{n+1}} u_{k_{n+1}}} \right\| > \lambda \sqrt2(1 + \delta) \right)
        \le 8 \left( \frac{p}{\lambda \sqrt2(1 + \delta) e} \right)^p \exp\left( \frac{(1 + \epsilon)\lambda^2}{2u^2_{k_{n+1}}} \right).
    \end{equation}
    
    Now take $p = \lambda \sqrt2(1 + \delta)$, it is clearly bigger than 4 if $\lambda\geq u_{k_{n+1}}$ and $n>N_2$ for some $N_2$. Thus from \cref{eq: Probc ineq} we have,
    $$
    \operatorname{Prob}_c\left( \sup_{k_n < m \le k_{n+1}} \left\| \frac{\lambda x_m}{s_{k_{n+1}} u_{k_{n+1}}} \right\| > \lambda \sqrt2(1 + \delta) \right)
    \le 8 \exp\left( \frac{(1 + \epsilon)\lambda^2}{2u^2_{k_{n+1}}} - \sqrt2(1 + \delta) \lambda \right).
    $$
    Since $\alpha'_n \to 0$, there exists $N_3 > 0$ such that for $n > N_3$,
    \begin{equation}\label{alpha2}
      0 < \alpha'_{k_{n+1}} \le \frac{3\epsilon (1+\epsilon) }{\sqrt2(1+\delta)} \quad \textrm{and}\quad \frac{\sqrt{2}(1+\delta)}{(1+\epsilon)}u_{k_{n+1}}\geq 1.
    \end{equation}

    Now for $n \geq \max \{ N_1(\epsilon'), N_2, N_3 \}$ 
    set $\lambda = \sqrt2(1 + \delta)u^2_{k_{n+1}} / (1 + \epsilon)\geq u_{k_{n+1}}$ and observe that 
    $$0 \le \lambda \le \frac{3\epsilon u_{k_{n+1}}^2}{\alpha'_{k_{n+1}}}. $$
        Therefore { as $e^{u_{k_{n+1}}^2}=\ln(s_{k_{n+1}}^2)$}, it follows that
    $$
    \operatorname{Prob}_c\left( \sup_{k_n < m \le k_{n+1}} \left\| \frac{\lambda x_m}{s_{k_{n+1}} u_{k_{n+1}}} \right\| > \lambda \sqrt2(1 + \delta) \right)
    \le 8\left( \ln s^2_{k_{n+1}} \right)^{- (1+\delta)^2/(1+\epsilon)}
    $$
    for all $n \ge \max \{N_1(\epsilon'), N_2, N_3 \}$. Notice that for all $n \in \N$,  $s^2_{k_{n+1}} \ge s^2_{k_n + 1} \ge \eta^{2n}$, we have
    $$
    \operatorname{Prob}_c\left( \sup_{k_n < m \le k_{n+1}} \left\| \frac{\lambda x_m}{s_{k_{n+1}} u_{k_{n+1}}} \right\| > \lambda \sqrt{2}(1 + \delta) \right)
    \le \left[ (2 \ln \eta)n \right]^{- (1 + \delta)^2 / (1 + \epsilon)}.
    $$
    
Recall that by \eqref{epscond},  $(1 + \delta)^2 / (1 + \epsilon) > 1$, we find that for $n_0 = \max\{N_1(\epsilon'), N_2, N_3 \}$,
    $$
    \sum_{n \ge n_0} \operatorname{Prob}_c\left( \sup_{k_n < m \le k_{n+1}} \left\| \frac{x_m}{s_{k_{n+1}} u_{k_{n+1}}} \right\| > \sqrt2(1 + \delta) \right) < \infty.
    $$
    Then \Cref{eq: x_m and x_n ineq} and \Cref{lem: Borel-Cantelli lemma} give the desired result.
\end{proof}
We note that the classical martingale analogue of the law of the iterated logarithm 
(cf.~\cite{stout1970martingale}) holds for non-self-adjoint martingales.  In the following, we remove the self-adjointness condition in \Cref{thm: main thm} { by standard tricks.} 
\begin{thm}\label{trick}
    Let $(x_n)_{n \geq 0}$ be a non-commutative martingale in $(\mathcal{M}, \tau)$ with $x_0 = 0$. Define \newline $t_n^2 = \max\left\{\norm{\sum_{i=1}^n E_{i-1}(d_i^*d_i)},\norm{\sum_{i=1}^n E_{i-1}(d_id_i^*)}\right\}$ and $v_n = [L(t_n^2)]^{1/2}$. Suppose that $t_n^2 \to \infty$ and $\norm{d_n}_\infty \leq \frac{\alpha_n t_n}{v_n}$, where $(\alpha_n)$ is a sequence of positive numbers such that $\alpha_n \to 0$, then 
    $$
    \limsup_{n \to \infty} \frac{x_n}{t_n v_n} \underset{a.u.}{\leq }\sqrt{2}.
    $$
\end{thm}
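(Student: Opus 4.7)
The plan is to reduce to the self-adjoint case already handled in \Cref{thm: main thm} via the standard $2\times 2$ matrix amplification. Set $\widetilde{\CM} = M_2(\CM)$, equipped with the normalized tracial state $\widetilde{\tau} = \frac{1}{2}(\operatorname{tr}\otimes\tau)$ and the filtration $\widetilde{\CM}_n = M_2(\CM_n)$, and define
\[
\widetilde{x}_n = \begin{pmatrix} 0 & x_n \\ x_n^* & 0 \end{pmatrix}.
\]
This is a self-adjoint martingale whose differences $\widetilde{d}_n = \begin{pmatrix} 0 & d_n \\ d_n^* & 0 \end{pmatrix}$ satisfy $\|\widetilde{d}_n\|_\infty = \|d_n\|_\infty$. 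A block computation gives
\[
\sum_{k=1}^n \widetilde{E}_{k-1}(\widetilde{d}_k^2) = \operatorname{diag}\!\Big(\sum_k E_{k-1}(d_k d_k^*),\ \sum_k E_{k-1}(d_k^* d_k)\Big),
\]
whose operator norm is exactly $t_n^2$. Hence when \Cref{thm: main thm} is applied to $(\widetilde{x}_n)$, the analogous quantities agree with $t_n^2$ and $v_n$, all hypotheses are met, and one obtains $\limsup_n \widetilde{x}_n/(t_n v_n) \auleq \sqrt{2}$ inside $\widetilde{\CM}$.

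It remains to transfer this a.u. bound from $\widetilde{\CM}$ back to $\CM$. Fix $\epsilon, \delta > 0$ and choose $\eta \in (0,1)$ with $(\sqrt{2}+\delta/2)/(1-\eta) \leq \sqrt{2}+\delta$. Use the a.u. bound on $(\widetilde{x}_n)$ to produce a projection $\widetilde{p} = (p_{ij}) \in \widetilde{\CM}$ with $\widetilde{\tau}(1-\widetilde{p}) < \epsilon\eta/2$ and $\limsup_n \|\widetilde{x}_n \widetilde{p}\|/(t_n v_n) \leq \sqrt{2}+\delta/2$. Since $\widetilde{p}$ is a projection, its $(2,2)$ entry $p_{22}$ is a positive contraction in $\CM$ with $\tau(1-p_{22}) \leq 2\widetilde{\tau}(1-\widetilde{p}) < \epsilon\eta$. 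Setting $q = 1_{[1-\eta,1]}(p_{22}) \in \CP(\CM)$, a Chebyshev-type estimate yields $\tau(1-q) < \epsilon$, and the functional calculus provides a bounded $f$ with $\|f(p_{22})\| \leq (1-\eta)^{-1}$ and $q = f(p_{22})\, p_{22}$, whence $\|x_n q\| \leq (1-\eta)^{-1}\|x_n p_{22}\|$. The key observation is that the $(1,2)$ block of $\widetilde{x}_n \widetilde{p}$ is precisely $x_n p_{22}$, so $\|x_n p_{22}\| \leq \|\widetilde{x}_n \widetilde{p}\|$. Chaining these three estimates gives $\limsup_n \|x_n q\|/(t_n v_n) \leq \sqrt{2}+\delta$, which is the desired a.u. bound.

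The only nontrivial step is this last transfer: since $\widetilde{x}_n$ does not lie in any copy of $\CM$ sitting inside $\widetilde{\CM}$, \Cref{rem: indep of ambient vna} cannot be invoked verbatim. The workaround of extracting the $(2,2)$ corner of the matrix projection and truncating by a spectral projection is short and uses essentially the same averaging idea as the proof of that remark.
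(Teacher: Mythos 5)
Your argument is correct and follows the same $2\times 2$ matrix amplification strategy as the paper; the only place the two diverge is the final transfer step, which you spell out more carefully. The paper multiplies $\widetilde{x}_n$ on the left by the fixed partial isometry $\widetilde{r}$ with a single $1$ in the $(2,1)$ slot, so that $\widetilde{r}\widetilde{x}_n$ sits in the $(2,2)$-corner copy of $\CM$ inside $M_2(\CM)$, and then invokes \Cref{rem: indep of ambient vna}. As you correctly flag, that remark does not apply verbatim because this corner is a non-unital subalgebra of $M_2(\CM)$; the paper's route can be repaired by first conditioning onto the unital diagonal subalgebra $\CM\otimes D_2$ (where \Cref{rem: indep of ambient vna} does apply) and then reading off the $(2,2)$ block of the resulting diagonal projection. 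Your workaround skips this detour: you take the $(2,2)$ entry $p_{22}$ of the matrix projection $\widetilde{p}$, observe that $\|x_n p_{22}\|\le\|\widetilde{x}_n\widetilde{p}\|$ by looking at the $(1,2)$ block, bound $\tau(1-p_{22})\le 2\widetilde{\tau}(1-\widetilde{p})$, and then cut $p_{22}$ down to the genuine projection $q=1_{[1-\eta,1]}(p_{22})$, using $1-p_{22}\ge\eta(1-q)$ for the trace estimate and $q=f(p_{22})p_{22}$ with $\|f(p_{22})\|\le(1-\eta)^{-1}$ for the norm estimate. This reproduces the averaging idea from the proof of \Cref{rem: indep of ambient vna} directly, and is a self-contained substitute for the paper's slightly informal citation; both routes yield the same $\sqrt{2}$ bound.
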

\begin{proof}
    Consider the von Neumann algebras 
    $$
    \tilde{\mathcal{M}} := \mathcal{M} \otimes M_2(\mathbb{C}) \quad \text{and} \quad \tilde{\mathcal{M}}_k := \mathcal{M}_k \otimes M_2(\mathbb{C})
    $$
    for each integer $k \geq 1$. Let $(\tilde{x}_n)_n$ be a sequence of elements in $\tilde{\mathcal{M}}$ defined by
    $$
    \tilde{x}_n := \begin{pmatrix}
    0 & x_n \\
    x_n^* & 0
    \end{pmatrix}, \quad n \geq 0,
    $$
    where $x_n \in \mathcal{M}$.
    
    The sequence $(\tilde{x}_n)_n$ forms a self-adjoint martingale with respect to the increasing filtration $(\tilde{\mathcal{M}}_n, \mathcal{E}_n)$, where the conditional expectation 
    $\mathcal{E}_n : \tilde{\mathcal{M}}_{n+1} \to \tilde{\mathcal{M}}_n$
    satisfies
    $$
    \mathcal{E}_n(\tilde{x}_{n+1}) := \begin{pmatrix}
    0 & E_n(x_{n+1}) \\
    E_n(x_{n+1}^*) & 0
    \end{pmatrix}, \quad n \geq 0.
    $$
    Here, $E_n$ denotes the conditional expectation from $\mathcal{M}_{n+1}$ onto $\mathcal{M}_n$.
    Let us define for $n\geq1$,
    $\tilde{d}_n := \tilde{x}_n - \tilde{x}_{n-1}.$
    Observe that each $\tilde{d}_n$ is self-adjoint for all $n \geq 0$. Moreover,
    \[
        \tilde{d}_n^2 = 
        \begin{pmatrix} 0 & d_n \\ d_n^* & 0 \end{pmatrix} 
        \begin{pmatrix} 0 & d_n \\ d_n^* & 0 \end{pmatrix} =
        \begin{pmatrix} d_n d_n^* & 0 \\ 0 & d_n^* d_n \end{pmatrix}.
    \]
    Define, for each $n \geq 1$,
    \[
        \tilde{t}_n^2 := \left\| \sum_{i=1}^n \CE_{i-1}\left(\tilde{d}_i^2\right) \right\|.
    \]
    It follows that $\tilde{t}_n = t_n$ for all $n \geq 1$. Furthermore, for every $n \geq 1$,
    \[
        \|\tilde{d}_n\|_\infty = \|d_n^* d_n\|^{1/2} = \|d_n\|.
    \]
    Consequently, $(\tilde{x}_n)_n$ constitutes a sequence of self-adjoint martingales in the von Neumann algebra $\tilde{\mathcal{M}}$ such that
    \[
        \lim_{n \to \infty} \tilde{t}_n = \infty
        \quad \text{and} \quad
        \|\tilde{d}_n\|_\infty \leq \frac{\alpha_n \tilde{t}_n}{v_n}, \quad \text{for all } n \geq 1.
    \]
    Applying Theorem~\ref{thm: main thm}, we deduce that
    \[
        \limsup_{n \to \infty} \frac{\tilde{x}_n}{t_n v_n} \underset{a.u.}{\leq} \sqrt{2}.
    \]

    Taking $\tilde r=\begin{pmatrix} 0 & 0 \\ 1 & 0 \end{pmatrix}$ in $\tilde \CM$ we first note that $\tilde{r} \tilde{x}_n = \begin{pmatrix} 0 & 0 \\ 0 & x_n \end{pmatrix}$ and obtain, 
    $$\limsup_{n\to \infty} \frac{\tilde{r}\tilde{x}_n }{t_n v_n} \underset{{a.u.}}{\leq} \sqrt{2}.$$
    Since almost uniform boundedness does not depend on the ambient von Neumann algebra (cf. Remark \ref{rem: indep of ambient vna}), we obtain 
    \[
    \limsup_{n\to \infty} \frac{x_n}{t_n v_n} \underset{{a.u.}}{\leq} \sqrt{2}.
    \]
\end{proof}
We now improve  \Cref{thm: main thm} thanks to a cutting argument. This can be thought of as a non-commutative analogue of \cite[Theorem 3]{stout1970martingale}. 
\begin{thm} 
Let $(x_n)$ be a sequence of self-adjoint martingales with $x_0=0$  in the von Neumann algebra $(\mathcal{M}, \tau)$. Assume that 
$s_n^2=\|\sum_{k=1}^n E_{k-1}(d_k^2)\| \to \infty$ and there is a sequence of positive real numbers $(K_n)$ going to 0 and $\alpha>0$ such that
    \[
     \norm{\sum_{n=1}^\infty \frac{u_n^\alpha}{ s_n^2} \, E_{n-1} \left[ d_n^2 \, \mathbf{1}_{(s_n K_n/u_n,\infty)}(|d_n|)  \right]} \leq C< \infty.
    \]
    with $u_n=L(s_n^2)^{1/2}$. Then
    $$
    \limsup_{n \to \infty} \frac{x_n}{s_n u_n}  \underset{a.u.}{\leq }\sqrt{2}.
    $$
\end{thm}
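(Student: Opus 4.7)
The proof adapts the classical truncation argument behind \cite[Theorem~3]{stout1970martingale}. Set $a_n = s_n K_n/u_n$ and let $\chi_n = \mathbf{1}_{[-a_n, a_n]}(d_n) \in \mathcal{M}_n$ be the spectral projection of the bounded self-adjoint operator $d_n$. Decompose $d_n = d_n' + d_n''$ with
\[
d_n' := d_n \chi_n - E_{n-1}(d_n \chi_n), \qquad d_n'' := d_n(1 - \chi_n) - E_{n-1}(d_n(1-\chi_n)),
\]
the second expression using $E_{n-1}(d_n) = 0$. Both are self-adjoint martingale differences. Set $y_n = \sum_{k \leq n} d_k'$ and $z_n = \sum_{k \leq n} d_k''$, so that $x_n = y_n + z_n$, and treat the two pieces separately.

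For the bounded part $y_n$, one has $\|d_k'\|_\infty \leq 2 a_k = (2K_k) \cdot s_k/u_k$ with $2K_k \to 0$, and since $d_k$ commutes with $\chi_k$,
\[
\sum_{k \leq n} E_{k-1}((d_k')^2) \leq \sum_{k \leq n} E_{k-1}(d_k^2 \chi_k) \leq \sum_{k \leq n} E_{k-1}(d_k^2) \leq s_n^2 \cdot 1.
\]
Inspecting the proof of \Cref{thm: main thm}, $s_n^2$ only enters as an upper bound (condition (iii) of \Cref{thm: exp ineq}); consequently the theorem applies verbatim to $y_n$ with the original $s_n$ as the normalising sequence, yielding $\limsup_n y_n/(s_n u_n) \auleq \sqrt{2}$.

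For the tail part $z_n$, a direct computation using $[d_k, \chi_k]=0$ gives $E_{k-1}(|d_k''|^2) \leq E_{k-1}(d_k^2\mathbf{1}_{(a_k,\infty)}(|d_k|))$; using $u_k \geq 1$ and the hypothesis, the self-adjoint martingale $w_n := \sum_{k \leq n} d_k''/(s_k u_k)$ satisfies
\[
\sum_k \Bigl\|\frac{d_k''}{s_k u_k}\Bigr\|_2^2 \leq \tau\Bigl(\sum_k \frac{u_k^\alpha}{s_k^2} E_{k-1}(d_k^2 \mathbf{1}_{(a_k,\infty)}(|d_k|))\Bigr) \leq C.
\]
Thus $w_n$ is $L_2$-bounded. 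By the non-commutative Doob inequality at $p=2$ (Junge's strengthening of \Cref{doob}), together with \Cref{lem: Chebyshev inequality lemma} and \Cref{lem: Borel-Cantelli lemma} applied to the residuals $w_n - w_N$, the martingale $w_n$ converges almost uniformly. Since $(s_k u_k)$ is eventually increasing to infinity, \Cref{lem: Kronecker2} then yields $z_n/(s_n u_n) \to 0$ almost uniformly.

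Combining via \Cref{lem: au convergence and bounded lemma} gives $\limsup_n x_n/(s_n u_n) \auleq \sqrt{2}$. The main delicate points are (i) the (essentially cosmetic) observation that \Cref{thm: main thm} is stable under using $s_n^2$ merely as an upper bound, and (ii) obtaining \emph{almost uniform}---rather than just bilaterally almost uniform---convergence of the tail martingale $w_n$, for which the $L_2$ version of Doob's inequality is critical; the extra $u_k^\alpha$ factor ($\alpha>0$) in the hypothesis comfortably places us above the $L_2$ threshold identified in \cite{hong2024failure}.
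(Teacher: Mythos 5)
Your proof is correct and uses the same truncation decomposition as the paper, but your handling of the bounded part is genuinely cleaner. The paper defines $s_n'^2 := \|\sum_{i\le n}E_{i-1}(z_i'^2)\|$ for the truncated martingale, invokes the triangle inequality for $\|\sum E_{i-1}(h_i^2)\|^{1/2}$ together with the hypothesis on $\sum u_n^\alpha s_n^{-2}E_{n-1}(d_n''^2)$ to show $s_n'/s_n\to1$ and $u_n'/u_n\to1$, and only then applies \Cref{thm: main thm} to $(z_k')$ before converting back. Your observation that in the proof of \Cref{thm: main thm} the quantity $s_n^2$ enters only as an upper bound on $\|\sum_{k\le n}E_{k-1}(d_k^2)\|$ (via condition (iii) of \Cref{thm: exp ineq}), while everywhere else only the increasingness and divergence of the scalar sequence $(s_n^2)$ are used, lets you apply it directly to $y_n$ with the original $(s_n,u_n)$ and skip the entire $s_n'/s_n\to 1$ step. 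This is worth flagging as a small standalone lemma (since it requires reopening the proof of \Cref{thm: main thm} rather than quoting it as a black box), but it does hold, and it notably makes the $\alpha>0$ hypothesis unnecessary: without the $s_n'/s_n\to1$ argument, the only use of $u_n^\alpha$ is to dominate $1/u_n^2\le 1\le u_n^\alpha$ in the $L_2$ estimate, which works already at $\alpha=0$.

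Two smaller points. First, for the a.u.\ convergence of the $L_2$-bounded tail martingale $w_n$, the paper simply cites \cite{cuculescu1971martingales}; your route via a $p=2$ column Doob maximal inequality plus \Cref{lem: Chebyshev inequality lemma} and \Cref{lem: Borel-Cantelli lemma} on residuals also works, but \Cref{doob} as stated in the paper is only for $p\ge4$, so you would need to invoke the $p\ge2$ version of Junge's column Doob inequality explicitly rather than gesturing at ``Junge's strengthening''; citing Cuculescu is shorter. Second, your closing remark is a little misleading: the $u_k^\alpha$ weight is not what ``places us above the $L_2$ threshold'' --- even with $\alpha=0$ the tail differences are summably small in $L_2$, since $s_k^{-2}u_k^{-2}\le s_k^{-2}$. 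In the paper's proof $\alpha>0$ is used for a different purpose (forcing $s_n'/s_n\to1$), which your cleaner argument sidesteps.
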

\begin{proof}
For every $n \in \N$ we write 
\[
d_n= \big(d_n''-E_{n-1}(d_n'')\big) + \big(d_n' - E_{n-1}(d_n')\big):=z_n + z_n' ,
\]
where $d_n':= d_n 1_{[0, K_ns_n/u_n]}(\abs{d_n})$ and $d_n'':= d_n - d_n'=d_n {1}_{(s_n K_n/u_n,\infty)}(|d_n|) $. 

We start by showing that $\Big(\frac{\sum_{k=1} ^nz_k}{s_n u_n}\Big)_n$ goes a.u. to 0. By Lemma \ref{lem: Kronecker2}, it suffices to show that $\Big(\sum_{k=1} ^n\frac{z_k}{s_k u_k}\Big)_n$ converges a.u. But since it is a martingale sequence, it suffices to show that it is bounded in $L_2$ by \cite{cuculescu1971martingales}. We have
$$ \sum_{n=1}^\infty \norm{\frac{z_n}{s_n u_n}}_2^2\leq \sum_{n=1}^\infty \frac 1{s_n^2u_n^2}\norm{{d_n''}}_2^2\leq C  <\infty.$$

    Next, to conclude, we claim that
    \[
    \limsup_{n \to \infty} \frac{\sum_{k=1}^n z_k'}{s_n u_n}  \underset{a.u.}{\leq }\sqrt{2}.
    \]
    Then this, together with Lemma \ref{lem: au convergence and bounded lemma}, proves the result.

      We  check that the martingale $(Z_n=\sum_{k=0}^nz'_k)_n$
      satisfies the hypotheses of \Cref{thm: main thm}.
      
  First, let   $s_n'^2:= \norm{\sum_{i=1}^{n} E_{i-1}(z_i'^2)}$. 
  By \cite{junge2002doob}, the quantity $\norm{\sum_{i=1}^n E_{i-1} (h_i^2)}^{1/2}$ is a norm on finite self-adjoint martingale differences $(h_i)_{i=1}^n$. Thus, 
  $$|s_n -s'_n|^2 \leq   \norm{\sum_{i=1}^{n} E_{i-1}(z_i^2)}\leq \norm{\sum_{i=1}^{n} E_{i-1}(d_i''^2)}.$$
      But since $s_i/u_i^{\alpha/2}<A s_n/u_n^{\alpha/2}$ for $i<n$ and some  $A>1$, our assumption gives
    \[
    \norm{\sum_{i=1}^{n} E_{i-1} (d_i^2 1_{(K_is_i/u_i, \infty)} (\abs{d_i}) )} \leq CA^2 s_n^2/u_n^\alpha.
    \]
    Therefore, we obtain
        \begin{equation*}\label{eq: s_n and s_n dash limit}
        \lim_{n\to \infty}\frac{s_n}{s_n'}=1 
        \qquad \textrm{ and also}\qquad \lim\limits_{n\to \infty}\frac{u_n}{u_n'}=1.
    \end{equation*}
        
    Clearly, by definition, for all $n \in \N$, $\norm{z_n'} \leq 2 K_n \frac{s_n}{u_n}= 2 K_n \frac{s_n u_n'}{s_n'u_n} \cdot \frac{s_n'}{u'_n} = K_n' \frac{s_n'}{u_n'}$ with $K_n'\to 0$. Obviously $z'_0=0$ and we conclude that { 
    \begin{equation*}\label{eq: au less sqrt 2}
        \limsup_{n\to \infty} \frac{\sum_{k=1}^{n}z_k'}{s_n' u_n'}\underset{a.u.}{\leq }\sqrt{2}.
    \end{equation*}The proof of the claim is over as $\lim \frac{u_ns_n}{u'_ns'_n}=1$.}
  \end{proof}

    \section{LIL for independent r.v.}\label{sec: lil for indep rv}

 This section is devoted to the study of the non-commutative LIL for independent random variables. The main result of this section (\Cref{thm: main them with independent rv}) generalises the Hartman-Wintner law of iterated logarithm \cite{hartman1941law} to the non-commutative setting. 
 
 Throughout this section, we fix a von Neumann algebra $\CM$ with f.n. tracial state $\tau$. We say that a random variable $y \in \CM$ to have mean zero if $\tau(y)=0$. For every $n \in \N$, we also set the notation $u_n=L(n)^{1/2}$. Now we have the following crucial result.

\begin{prop}\label{prop : variation}
    For any $\delta'>0$, we can find some $e>0$ such that any sequence $(y_i)$ of  independent mean zero self-adjoint random variables such that 
    \begin{enumerate}
        \item\label{cond1} $\norm{y_i} \leq e \frac{\sqrt{i}}{u_i}$, 
        \item \label{cond2}$\norm{y_i}_2 \leq 1$,
    \end{enumerate}
    we have, with  $x_n=\sum_{k=1}^n y_k$, 
    $$ 
    \limsup_{n \to \infty} \frac{x_n}{\sqrt{n} u_n}  \underset{a.u.}{\leq }\sqrt{2}(1+\delta'). $$
  \end{prop}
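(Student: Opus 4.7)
The approach is to mimic, in the independent setting, the Borel--Cantelli argument used to prove \Cref{thm: main thm}. Observe first that $(x_n)_{n \geq 0}$ is automatically a self-adjoint martingale with respect to the filtration $\CM_n := \{1, y_1, \ldots, y_n\}''$, with differences $d_n = y_n$. By independence, the conditional variances are scalars: $E_{n-1}(y_n^2) = \tau(y_n^2)\cdot 1 = \|y_n\|_2^2 \cdot 1 \leq 1$, hence $\sum_{k=1}^N E_{k-1}(d_k^2) \leq N \cdot 1$. Combined with the hypothesis $\|y_k\| \leq e \sqrt{k}/u_k$ and the (easily verified) monotonicity of $k \mapsto \sqrt{k}/u_k$, this yields, after normalising the truncated martingale $(x_m)_{m=0}^{k_{n+1}}$ by $\sqrt{k_{n+1}}\, u_{k_{n+1}}$, the estimates $M \leq e/u_{k_{n+1}}^2$ and $D^2 \leq 1/u_{k_{n+1}}^2$ in the notation of the exponential inequality \Cref{thm: exp ineq}. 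These are identical in form to the quantities appearing in the proof of \Cref{thm: main thm}, the only difference being that the role of $\alpha'_{k_{n+1}}$ is played by the fixed constant $e$.

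Next I would run the Borel--Cantelli skeleton exactly as in the proof of \Cref{thm: main thm}. Set $k_n := \lceil \eta^{2n} \rceil$ for $\eta > 1$; for $k_n < m \leq k_{n+1}$ the denominator $\sqrt{m}\, u_m$ stays within a factor $\eta(1-\epsilon')^{-1}$ of $\sqrt{k_{n+1}}\, u_{k_{n+1}}$ once $n$ is large. Applying \Cref{thm: exp ineq}(2) to $\pm x_{k_{n+1}}/(\sqrt{k_{n+1}}\, u_{k_{n+1}})$, then Doob's column maximal inequality \Cref{doob}, the non-commutative Chebyshev inequality \Cref{lem: Chebyshev inequality lemma}, and the elementary bound $|v|^p \leq p^p e^{-p}(e^v + e^{-v})$ with the choice $p = \lambda t$, $t = \sqrt{2}(1+\delta)$, $\lambda = \sqrt{2}(1+\delta) u_{k_{n+1}}^2/(1+\epsilon)$, gives
\[
\operatorname{Prob}_c\!\Big( \sup_{k_n < m \leq k_{n+1}} \big\| x_m / (\sqrt{k_{n+1}}\, u_{k_{n+1}}) \big\| > \sqrt{2}(1+\delta) \Big) \leq 16\, (\ln k_{n+1})^{-(1+\delta)^2/(1+\epsilon)},
\]
which is summable in $n$ as soon as $(1+\delta)^2 > 1+\epsilon$. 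The non-commutative Borel--Cantelli lemma \Cref{lem: Borel-Cantelli lemma}, together with the comparison between $\sqrt{m}\, u_m$ and $\sqrt{k_{n+1}}\, u_{k_{n+1}}$, then gives $\limsup_n x_n/(\sqrt{n}\, u_n) \auleq \sqrt{2}\,\eta(1+\delta)/(1-\epsilon')$.

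The one place where the constant $e$ genuinely enters, and which I expect to be the main delicate point, is the admissibility condition for $\lambda$ in \Cref{thm: exp ineq}(2): one needs $\lambda \leq 3\epsilon/M = 3\epsilon u_{k_{n+1}}^2/e$, which with the optimal $\lambda$ above becomes $e \leq 3\epsilon(1+\epsilon)/(\sqrt{2}(1+\delta))$. In \Cref{thm: main thm} the analogous quantity $\alpha'_n$ tends to zero, so this admissibility holds automatically for $n$ large; here $e$ is a fixed constant and must be chosen small enough a priori. Given $\delta' > 0$, the plan is therefore to first pick $\eta > 1$ and $\delta, \epsilon', \epsilon \in (0,1)$ satisfying $\eta(1+\delta)/(1-\epsilon') < 1 + \delta'$ and $(1+\delta)^2 > 1+\epsilon$, and then to set $e := 3\epsilon(1+\epsilon)/(\sqrt{2}(1+\delta))$. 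This explicit quantitative dependence of $e$ on $\delta'$ is precisely why one loses the sharp constant $\sqrt{2}$ and only obtains $\sqrt{2}(1+\delta')$, consistent with the proposition's statement.
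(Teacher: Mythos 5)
Your proposal is correct and takes essentially the same route as the paper: the paper's proof also reruns the Borel--Cantelli argument from Theorem~\ref{thm: main thm}, notes $s_n^2\leq n$ by independence and condition (\ref{cond2}), takes $M=e/u_{k_{n+1}}^2$ using the monotonicity of $l\mapsto\sqrt{l}/u_l$ and condition (\ref{cond1}), takes $D^2=1/u_{k_{n+1}}^2$, and chooses $e$ a priori so that $0<e<\frac{3\epsilon(1+\epsilon)}{\sqrt{2}(1+\delta)}$, exactly the admissibility condition you identified as the crux.
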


  \begin{proof}
   Let $\CM_n=\{1, y_1,...,y_n\}''$. The sequence $(x_n=\sum_{k=1}^n y_k)_{n \ge 0}$, with $x_0 = 0$
  is a self-adjoint martingale in the von Neumann algebra $(\mathcal{M}, \tau)$
  with respect to the filtration $(\CM_n)$. Its difference sequence is given by $d_n:=dx_n=y_n$.  Then, the proof is almost the same as \Cref{thm: main thm} once $\delta'$ is fixed. First, one has $s_n^2\leq n$ by independence and  \eqref{cond2}. Next, we used that the sequence $(\alpha_n')$ goes to 0 twice: 
    \begin{itemize}
    \item right before \eqref{alpha1} to check the assumptions of
      \Cref{thm: exp ineq}; this now is true by our assumption \eqref{cond1}:
      that is we take $M=\frac e{u^2_{k_{n+1}}}$ because
      $l\mapsto \frac {\sqrt l}{u_l}$ is increasing in $l$. 
      \item to have the first inequality in \eqref{alpha2}. Now, this is how we choose $e$, that is so that $0<e<\frac{3\epsilon (1+\epsilon) }{\sqrt2(1+\delta)}$.
      \end{itemize}
  The rest of the arguments are unchanged.     
 \end{proof}

  For $z\in L_1(\CM)$, we denote $\mathring z = z-\tau(z)$. We will need the following estimate:

  \begin{lem}\label{calc}
    Let $e>0$ and $(y_k)_{k\geq 1}$ be a family of independent, identically distributed, mean zero,
    self-adjoint random variables in $L_2(\CM)$, then with $z_k=y_k1_{\sqrt k\geq |y_k|>e \frac{ \sqrt k}{u_k}}$, $\sum_{n=1}^\infty \frac 1 {\sqrt n  {u_n}} \mathring z_n$ converges in $L_2(\CM)$.   \end{lem}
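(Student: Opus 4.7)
The plan is to reduce the claim to a Hilbert-space summability question via orthogonality, then evaluate the resulting series through Fubini against the common spectral distribution of the $y_k$.

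First, since each $\mathring{z}_n$ is centered and lies in $\{y_n\}''$, independence of the $y_k$ immediately gives $\tau(\mathring{z}_i^*\mathring{z}_j) = \tau(\mathring{z}_i^*)\tau(\mathring{z}_j) = 0$ for $i \neq j$, i.e.\ pairwise $L_2$-orthogonality. Hence the partial sums of $\sum_n \mathring z_n/(\sqrt n u_n)$ form a Cauchy sequence in $L_2(\CM)$ if and only if
$$\sum_{n=1}^\infty \frac{\|\mathring{z}_n\|_2^2}{n u_n^2} < \infty.$$
Since centering decreases the $L_2$-norm ($\|\mathring z_n\|_2^2 = \|z_n\|_2^2 - |\tau(z_n)|^2$), it is enough to prove the same estimate with $z_n$ in place of $\mathring z_n$.

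Next, I would use that the $y_k$ are identically distributed with common spectral distribution $\mu$ on $\R$ to express
$$\|z_n\|_2^2 \;=\; \int_\R t^2\,\mathbf 1_{\{e\sqrt n/u_n < |t| \leq \sqrt n\}}\,d\mu(t).$$
Summing in $n$ and exchanging with the integral by Fubini (the integrand being non-negative) produces
$$\sum_{n=1}^\infty \frac{\|z_n\|_2^2}{n u_n^2} \;=\; \int_\R t^2\,\Phi(t)\,d\mu(t), \qquad \Phi(t) \;:=\; \sum_{n\,:\,t^2 \le n,\;e^2 n < t^2 u_n^2} \frac{1}{n u_n^2}.$$

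The main technical point is a uniform bound $\Phi(t) \le K/e^2$. This should come out immediately by noting that, in the index range, the condition $e^2 n < t^2 u_n^2$ rewrites as $\frac{1}{n u_n^2} < \frac{t^2}{e^2 n^2}$; bounding $\sum_{n\ge \lceil t^2\rceil} 1/n^2 \lesssim 1/t^2$ (and observing that the range is empty when $|t|<e$, since one would otherwise need $u_n>e$ together with $n\ge t^2$ incompatibly for small $|t|$) then gives the desired uniform estimate. Plugging back yields a bound proportional to $\int_\R t^2\,d\mu(t) = \|y_1\|_2^2 < \infty$, which closes the argument. I expect this uniform bound on $\Phi$ to be the only step requiring any genuine care; the reduction via orthogonality and the Fubini swap are routine once one is willing to work in $L_2(\CM)$ as a Hilbert space.
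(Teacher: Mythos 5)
Your proof is correct and follows the same skeleton as the paper: pairwise $L_2$-orthogonality of the centered blocks, the fact that centering can only decrease the $L_2$-norm, and a Fubini swap against the common distribution to reduce everything to a uniform bound on the weight function. Where you diverge is in that final estimate of $\Phi(t)$ (the paper's $G(t)$), and your version is cleaner. The paper introduces $n_t$, the largest index for which $u_{n_t}^2 t^2 \geq e^2 n_t$, shows $\ln n_t \leq a\ln\lceil t^2\rceil + b$ so that $u_{n_t}\leq c\,u_{\lceil t^2\rceil}$, and then bounds the sum by (number of terms) times (largest term) to land on $G(t)\leq c^2/e^2$. You instead bound each term directly: the membership condition $e^2 n < t^2 u_n^2$ is literally $\frac{1}{n u_n^2} < \frac{t^2}{e^2 n^2}$, and since the range also forces $n\geq \lceil t^2\rceil$, the sum is controlled by $\frac{t^2}{e^2}\sum_{n\geq\lceil t^2\rceil} n^{-2}\leq 2/e^2$. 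This avoids the auxiliary $n_t$ and the logarithmic comparison $u_{n_t}\leq c\,u_{\lceil t^2\rceil}$ entirely, which is a genuine simplification of the only step needing care. (Your aside that the range is empty for $|t|\leq e$ is correct, since $\sqrt n\geq u_n$ for all $n$, but it is not actually needed: the bound holds vacuously there.)
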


  \begin{proof}
   Let $y(t)=\mu_t(y_1)$ defined on $(0,1)$.
    By orthogonality, we have to show that $\sum_n\frac 1 {n  u_n^2} \|\mathring z_n\|_2^2<\infty$. But by the hypotheses
    $$\sum_n\frac 1 {n  u_n^2} \|\mathring z_n\|_2^2\leq \sum_n\frac 1 {n  u_n^2} \|z_n\|_2^2\leq \sum_n\frac 1 {n  u_n^2} \int_{\sqrt n \geq y>e \frac{ \sqrt n}{u_n}} y^2= \int G(y)y^2,$$
    where for $t>0$,
    $G(t)=\sum_{ \sqrt n \geq t> e \frac{ \sqrt n}{u_n}}\frac 1 {n
      u_n^2}$. For every $t>0$, let $n_t$ be the biggest integer so
    that $u_{n_t}^2 t^2 \geq e^2n_t$. Note that one can easily find
    $a>1$ and $b>0$ (depending on $e$) so that $a \ln(\lceil t^2\rceil )+b \geq \ln(n_t)$ when
    $t>0$. It follows that $u_{n_t}= L(n_t)^{1/2}\leq c u_{\lceil t^2\rceil}$
    for some $c>1$.    We have the estimate
    $$G(t)\leq \sum_{n=\lceil t^2\rceil}^{n_t} \frac 1 {nu_n^2}\leq
    \frac{n_t}{\lceil t^2\rceil u_{\lceil t^2\rceil}^2}\leq \frac{u_{n_t}^2 \lceil t^2\rceil}{e^2\lceil t^2\rceil u_{\lceil t^2\rceil}^2}\leq\frac {c^2}{e^2}. $$
    This means that $G$ is bounded and this concludes the proof.    
  \end{proof}

  \begin{lem}\label{tail}
   Let $e>0$ and $(y_i)_{i\geq 1}$ be a family of independent, identically distributed mean zero,
    self-adjoint random variables in $L_2(\CM)$, then with $w_k=y_k1_{ |y_k|>\sqrt k}$, $\frac 1{\sqrt n}\sum_{k=1}^n \mathring w_k$ goes to 0 a.u.
      \end{lem}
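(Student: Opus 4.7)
The plan is a non-commutative Borel--Cantelli truncation: because $y_1\in L_2(\CM)$ the tail projections $1_{(\sqrt k,\infty)}(|y_k|)$ are summable in trace, so on a projection of arbitrarily small defect all but finitely many $w_k$ vanish identically, and the only remaining contribution comes from the centering scalars $\tau(w_k)$, which are easy to estimate separately.

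Concretely, since $(y_k)$ is i.i.d., a layer-cake comparison $\sum_{k\geq 1}1_{(\sqrt k,\infty)}(|y_1|)\leq y_1^2$ (via functional calculus) gives
$$\sum_{k\geq 1}\tau\bigl(1_{(\sqrt k,\infty)}(|y_k|)\bigr)=\sum_{k\geq 1}\tau\bigl(1_{(\sqrt k,\infty)}(|y_1|)\bigr)\leq\tau(y_1^2)<\infty.$$
I would set $Q_N:=\bigwedge_{k\geq N}1_{[0,\sqrt k]}(|y_k|)$ and invoke the standard subadditivity $\tau(1-\bigwedge_k p_k)\leq \sum_k\tau(1-p_k)$ to obtain $\tau(1-Q_N)\to 0$ as $N\to\infty$. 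Since $1_{[0,\sqrt k]}(|y_k|)$ is a spectral projection of $y_k$, the inequality $Q_N\leq 1_{[0,\sqrt k]}(|y_k|)$ for $k\geq N$ forces $w_kQ_N=0$. The initial terms $w_1,\dots,w_{N-1}$ lie in $L_2(\CM)$ but need not be bounded, so I would further use $\tau$-measurability to pick, for each $k<N$, a projection $r_k\in\CP(\CM)$ with $\tau(1-r_k)<\epsilon 2^{-k-1}$ and $C_k:=\|w_kr_k\|_\infty<\infty$, and then set $P:=Q_N\wedge r_1\wedge\cdots\wedge r_{N-1}$ with $N$ chosen so that $\tau(1-Q_N)<\epsilon/2$. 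This yields $\tau(1-P)<\epsilon$, $\|w_kP\|_\infty\leq C_k$ for $k<N$, and $w_kP=0$ for $k\geq N$.

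Writing $\mathring w_k=w_k-\tau(w_k)$ one obtains
$$\Bigl\|\tfrac{1}{\sqrt n}\sum_{k=1}^n\mathring w_k\,P\Bigr\|_\infty\leq\tfrac{1}{\sqrt n}\sum_{k<N}\bigl(C_k+|\tau(w_k)|\bigr)+\tfrac{1}{\sqrt n}\sum_{k=1}^n|\tau(w_k)|,$$
where the first term is $O(n^{-1/2})$. For the second, $\tau(y_1)=0$ combined with Chebyshev's trick gives
$$|\tau(w_k)|\leq\tau\bigl(|y_1|\,1_{(\sqrt k,\infty)}(|y_1|)\bigr)\leq\tfrac{1}{\sqrt k}\tau\bigl(y_1^2\,1_{(\sqrt k,\infty)}(|y_1|)\bigr)=:\tfrac{a_k}{\sqrt k},$$
with $a_k\to 0$ by dominated convergence; together with $\sum_{k=1}^n k^{-1/2}=O(\sqrt n)$ a standard Cesaro estimate yields $\frac{1}{\sqrt n}\sum_{k=1}^na_k/\sqrt k\to 0$. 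Combining gives $\|\frac{1}{\sqrt n}\sum_{k=1}^n\mathring w_k\,P\|_\infty\to 0$, which is exactly a.u.\ convergence to $0$. The only real technical subtlety is that the untruncated $w_k$ for small $k$ need not lie in $\CM$; once this is patched by the refinement with the $r_k$, the bulk of the sum is killed by $P$ and only the well-controlled centering survives.
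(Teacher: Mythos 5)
Your proof is correct and follows essentially the same approach as the paper: use summability of $\tau\bigl(1_{(\sqrt k,\infty)}(|y_k|)\bigr)$ to find a projection of small defect on which $w_k$ vanishes for all large $k$, handle the finitely many initial terms by $\tau$-measurability, and then estimate the scalar centerings $\tau(w_k)$ separately. The only cosmetic difference is in the last step, where the paper shows $\sum_n \tau(|w_n|)/\sqrt n<\infty$ and applies Kronecker, whereas you derive the pointwise bound $|\tau(w_k)|\leq a_k/\sqrt k$ with $a_k\to 0$ and close with a Toeplitz-type averaging; both routes are elementary and equally valid (and note that the bound $|\tau(w_k)|\leq \tau(|y_1|1_{(\sqrt k,\infty)}(|y_1|))$ is just the triangle inequality for the trace and does not actually need $\tau(y_1)=0$).
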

      \begin{proof}
        Let $p_k=1_{ |y_k|>\sqrt k}$ and $q_k=1_{y>\sqrt k}$ with $y=\mu(y_1)$, we have
        $\tau(p_k)= \int q_k$. But $\sum_{k=1}^\infty \int q_k\leq \int y^2<\infty$. Hence for any $\epsilon>0$, we may find $k_0$ so that
        $p'=\wedge_{k\geq k_0}(1-p_k)$ satisfies $\tau(1-p')\leq \epsilon/2$. Taking $p=p'\wedge q$ where $q$ is such that $\tau(1-q)<\epsilon/2$ and $w_kq\in \CM$ for $k< k_0$. We have $\tau(1-p)\leq \epsilon$ and $\frac 1 {\sqrt n}\sum_{k=1}^n w_kp=\frac 1 {\sqrt n}\sum_{k=1}^{k_0} w_kp$, this goes to 0 in $\CM$.

        As $\mathring{w_k}=w_k-\tau (w_k)$. It remains to check that $\frac 1 {\sqrt n}\sum_{k=1}^n \tau(w_k)$ goes to 0. By  Lemma \ref{lem: Kronecker2}, it suffices to prove that $\sum_{n\geq 1} \frac 1 {\sqrt n}\tau(|w_n|)<\infty$. Since $\sum_{k=1}^{\lfloor { y^2}\rfloor} \frac 1{\sqrt k}\leq { 2}|y|$, one gets $\sum_{n\geq 1} \frac 1 {\sqrt n}\tau(|w_n|)<{2}\int |y|^2$.     
           \end{proof}

      \begin{thm}\label{thm: main them with independent rv}
  Let $(y_i)$ be a sequence of independent, identically distributed, mean zero 
  random variables with $\|y_i\|_2\leq 1$, then 
    $$ 
    \limsup_{n \to \infty} \frac{x_n}{\sqrt{n} u_n}  \underset{a.u.}{\leq }\sqrt{2}.
    $$
  \end{thm}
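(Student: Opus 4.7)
My plan is to first reduce to the self-adjoint setting, then decompose each $y_k$ into three truncated pieces and handle each separately using the three preceding results (Proposition~\ref{prop : variation}, Lemma~\ref{calc}, Lemma~\ref{tail}), and finally let a tuning parameter $\delta'$ tend to $0$.

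\textbf{Step 1 (reduction to self-adjoint).} Exactly as in \Cref{trick}, I would embed the problem into $\tilde\CM=M_2(\C)\otimes\CM$ equipped with the normalized trace $\frac12\mathrm{Tr}\otimes\tau$ and work with $\tilde y_k=\begin{pmatrix}0 & y_k\\ y_k^* & 0\end{pmatrix}$. These are i.i.d., self-adjoint, mean zero, and one checks $\|\tilde y_k\|_2^2=\|y_k\|_2^2\leq 1$. Proving the LIL for $\tilde x_n=\sum_{k=1}^n\tilde y_k$ and then multiplying by $\tilde r=\begin{pmatrix}0&0\\1&0\end{pmatrix}$ from the left, combined with \Cref{rem: indep of ambient vna}, yields the bound for $x_n$. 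So I assume from now on that $y_k$ is self-adjoint.

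\textbf{Step 2 (the three-piece decomposition).} Fix $\delta'>0$ and let $e>0$ be the constant furnished by \Cref{prop : variation} for this $\delta'$, made small enough that $2e$ also satisfies the hypothesis \eqref{cond1} of that proposition (with $\delta'$ replaced by a slightly larger value, still tending to $0$ with $\delta'$). Using the spectral projections of $|y_k|$, write
\[
 y_k=\mathring{z_k'}+\mathring{z_k}+\mathring{w_k},
\]
where $z_k'=y_k\,1_{|y_k|\leq e\sqrt k/u_k}$, $z_k=y_k\,1_{e\sqrt k/u_k<|y_k|\leq\sqrt k}$, $w_k=y_k\,1_{|y_k|>\sqrt k}$, and $\mathring a=a-\tau(a)$; the three traces sum to $\tau(y_k)=0$ so the decomposition is exact.

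\textbf{Step 3 (treating the three pieces).} For the small piece $\mathring{z_k'}$: these are independent, self-adjoint, mean zero, $\|\mathring{z_k'}\|\leq 2e\sqrt k/u_k$, and $\|\mathring{z_k'}\|_2\leq\|z_k'\|_2\leq\|y_k\|_2\leq 1$, so \Cref{prop : variation} gives
\[
\limsup_{n\to\infty}\frac{\sum_{k=1}^n\mathring{z_k'}}{\sqrt n\,u_n}\underset{a.u.}{\leq}\sqrt 2\,(1+\delta').
\]
For the middle piece $\mathring{z_k}$: \Cref{calc} shows the partial sums of $\frac{1}{\sqrt n\,u_n}\mathring{z_n}$ form an $L_2$-bounded martingale, hence converge a.u.\ by the Cuculescu martingale convergence theorem~\cite{cuculescu1971martingales}; applying the a.u.~Kronecker lemma (\Cref{lem: Kronecker2}) with $\alpha_n=\sqrt n\,u_n$ yields $\frac{1}{\sqrt n\,u_n}\sum_{k=1}^n\mathring{z_k}\xrightarrow{a.u.}0$. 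For the large piece $\mathring{w_k}$: \Cref{tail} directly gives $\frac{1}{\sqrt n}\sum_{k=1}^n\mathring{w_k}\xrightarrow{a.u.}0$, and dividing further by $u_n\geq 1$ preserves this.

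\textbf{Step 4 (conclusion).} Two applications of \Cref{lem: au convergence and bounded lemma} assemble the three pieces into
\[
\limsup_{n\to\infty}\frac{x_n}{\sqrt n\,u_n}\underset{a.u.}{\leq}\sqrt 2\,(1+\delta').
\]
Since $\delta'>0$ is arbitrary, the definition of a.u.\ boundedness (\Cref{defn: auleq}) with $\delta'$ chosen depending on the tolerance parameter $\delta$ yields $\limsup x_n/(\sqrt n\,u_n)\auleq\sqrt 2$. The main bookkeeping obstacle is coordinating $e$ (fixing the truncation level) with $\delta'$ so that the application of \Cref{prop : variation} remains valid; once this is set up, all other ingredients are already available from the earlier sections.
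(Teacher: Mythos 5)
Your proposal is correct and follows essentially the same route as the paper: reduce to the self-adjoint case via the $M_2(\C)\otimes\CM$ trick, split $y_k$ into a small/middle/large truncation (centered), apply \Cref{prop : variation} to the small piece, \Cref{calc} plus Cuculescu and the a.u.\ Kronecker lemma to the middle piece, \Cref{tail} to the large piece, assemble via \Cref{lem: au convergence and bounded lemma}, and let $\delta'\to0$. The only cosmetic difference is the bookkeeping of the truncation constant: the paper cuts at $\tfrac{e\sqrt k}{2u_k}$ so that after centering the sup-norm bound is still $\le e\sqrt k/u_k$, whereas you cut at $e\sqrt k/u_k$ and instead shrink $e$ so that $2e$ meets the hypothesis of \Cref{prop : variation} — an equivalent reparametrization.
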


\begin{proof}
We will assume that $y_k$’s are self-adjoint, the general case can be deduced from the proof below using the trick of Theorem \ref{trick}. We want to show that for any $\delta'>0$, 
 $\limsup_{n \to \infty} \frac{x_n}{\sqrt{n}u_n } \underset{a.u.}{\leq }\sqrt{2}(1+\delta')$. 

 Let $e\in (0,1)$ be given by Proposition \ref{prop : variation}. We decompose the martingale into three parts
\begin{equation}\label{decomp}y_k= \mathring{\overbrace{y_k 1_{[0, \frac {e \sqrt k}{2u_k}]}(|y_k|) }}+\mathring{\overbrace{y_k 1_{(\frac {e \sqrt k}{2u_k}, \sqrt k]}(|y_k|) }}+ \mathring{\overbrace{y_k 1_{(\sqrt k,\infty)}(|y_k|) }}= \mathring{y_k'} +\mathring{z_k} + \mathring{w_k}.\end{equation}

 The sequence $(\mathring{y_k'})$ satisfies all the hypotheses in Proposition
 \ref{prop : variation}, so we get, with $x'_n=\sum_{k=1}^n \mathring{y_k'}$: $$ 
 \limsup_{n \to \infty} \frac{x_n'}{\sqrt{n} u_n}  \underset{a.u.}{\leq }\sqrt{2}(1+\delta').$$ 
 Next, by  Lemma \ref{calc}, we have that $\sum_{n=1}^\infty \frac 1 {\sqrt n  {u_n}} \mathring z_n\in L_2$. Thus, the associated martingale sequence
 $(\sum_{k=1}^n \frac 1 {\sqrt k {u_k}} \mathring z_k)_n$ converges a.u. by \cite{cuculescu1971martingales}. Then by Lemma \ref{lem: Kronecker2}, we conclude that  $\frac 1 {\sqrt n  {u_n}}\sum_{k=1}^n
 \mathring z_k$ converges a.u. to 0. Finally, Lemma \ref{tail} yields that 
$\frac 1 {\sqrt n  {u_n}}\sum_{k=1}^n
\mathring w_k$ converges a.u. to 0. One concludes with Lemma
\ref{lem: au convergence and bounded lemma}.
\end{proof}
\begin{rem}
The martingale with differences $\mathring{z_k} + \mathring{w_k}$ is easily seen to be in $L_1$ (see Lemma 2.3 in \cite{deAcosta}). This is a way to get a quick proof for the b.a.u. convergence or the commutative result. It seems simpler than in \cite{deAcosta} as we do not use any maximal inequality that seems unavailable here.

It is also possible to relax the identically distributed assumption to a weaker domination property in distribution that is commonly used (see \cite{hartman1941law} and \cite{konwerska2008law}). We mean that $y=\sup_i \mu_t(y_i)\in L_2(0,1)$. We briefly explain how. First, as a.u. convergence does not depend on the algebra, we can enlarge all the algebras so that $y_k$ sits in a diffuse algebra that is independent from the previous ones. Then, one has to change the decomposition \eqref{decomp}, taking $y'_k=y_k a_k$, $z_k=y_k b_k$ and $w_k= y_k{c_k}$ in such a way that $a_k, b_k, c_k$ are orthogonal projections summing up to 1 so that $\mu(y_k')\leq \mu\big(y1_{[0,\frac {e \sqrt k}{2u_k}]}(y)\big)$, $\mu(z_k)\leq \mu\big(y 1_{(\frac {e \sqrt k}{2u_k}, \sqrt k]}(y)\big)$, $\mu(w_k)\leq \mu\big(y  1_{(\sqrt k,\infty)}(y)\big)$. This is possible as $\mu(y_k)\leq y=\mu(y)$ and the algebras are diffuse.
\end{rem}

\section{Acknowledgements}
This work was partially carried out during the visits of the first and third authors to the University of Caen, LMNO. They also acknowledge the support of Prof. Pierre Fima and Prof. Issan Patri, provided through the CNRS IEA GAOA grant.


\providecommand{\bysame}{\leavevmode\hbox to3em{\hrulefill}\thinspace}
\providecommand{\MR}{\relax\ifhmode\unskip\space\fi MR }
\providecommand{\MRhref}[2]{%
  \href{http://www.ams.org/mathscinet-getitem?mr=#1}{#2}
}

\end{document}